\newtheorem{theorem}{Theorem}
\newtheorem{lemma}{Lemma}
\newtheorem{corollary}{Corollary}
\newtheorem{remark}{Remark}
\numberwithin{equation}{subsection}
\begin{document}
\author{George Tephnadze}
\title[Walsh-Kaczmarz-Nörlund means]{On the maximal operators of
Walsh-Kaczmarz-Nörlund means}
\address{G. Tephnadze, Department of Mathematics, Faculty of Exact and
Natural Sciences, Tbilisi State University, Chavchavadze str. 1, Tbilisi
0128, Georgia and Department of Engineering Sciences and Mathematics, Lule%
\aa {} University of Technology, SE-971 87, Lule\aa {}, Sweden.}
\email{giorgitephnadze@gmail.com}
\thanks{The research was supported by Shota Rustaveli National Science
Foundation grant no.13/06 (Geometry of function spaces, interpolation and
embedding theorems).}
\date{}
\maketitle

\begin{abstract}
The main aim of this paper is to investigate $\left( H_{p},L_{p,\infty
}\right) $ type inequalities for maximal operators of Nörlund means with
monotone coefficients of one-dimensional Walsh-Kaczmarz system. By applying
this results we conclude a.e convergence of such Walsh-Kaczmarz-Nörlund
means.
\end{abstract}

\keywords{}
\subjclass{}

\textbf{2010 Mathematics Subject Classification.} 42C10.

\textbf{Key words and phrases:} Walsh-Kaczmarz system, Walsh-Kaczmarz-Nö%
rlund means, martingale Hardy space.

\section{Introduction}

In 1948 $\breve{\text{S}}$neider \cite{sne} introduced the Walsh-Kaczmarz
system and showed that the inequality $\limsup_{n\rightarrow \infty
}D_{n}^{\kappa }(x)/\log n\geq C>0$ holds a.e. In 1974 Schipp \cite{Sch2}
and Young \cite{Y} proved that the Walsh-Kaczmarz system is a convergence
system. Skvortsov \cite{Sk1} in 1981 showed that the Fejér means with
respect to the Walsh-Kaczmarz system converge uniformly to $f$ for any
continuous functions $f$. Gát \cite{gat} proved that, for any integrable
functions, the Fejér means with respect to the Walsh-Kaczmarz system
converges almost everywhere to the function. He showed that the maximal
operator $\sigma ^{\ast ,\kappa }$ of \ Walsh-Kaczmarz-Fejér means is of
weak type $(1,1)$ and of type $(p,p)$ for all $1<p\leq \infty $. Gát's
result was generalized by Simon \cite{S2}, who showed that the maximal
operator $\sigma ^{\ast ,\kappa }$ is of type $(H_{p},L_{p})$ for $p>1/2$.
In the endpoint case $p=1/2$ Goginava \cite{Gog-PM} (see also \cite{GGN}, 
\cite{tep1} and \cite{tep2}) proved that maximal operator $\sigma ^{\ast
,\kappa }$ of Walsh-Kaczmarz-Fejér means is not of type $(H_{1/2},L_{1/2})$
and Weisz \cite{We5} showed that the following is true:

\textbf{Theorem W1. }The\textbf{\ }maximal operator $\sigma ^{\ast ,\kappa }$
of \ Walsh-Kaczmarz-Fejér means is bounded from the Hardy space $H_{1/2}$ to
the space $L_{1/2,\infty }$.

The almost everywhere convergence of $\left( C,\alpha \right) $ $\left(
0<\alpha <1\right) $ means with respect Walsh-Kaczmarz system was considered
by Goginava \cite{gog1}. Gát and Goginava \cite{gago} proved that the
following is true:

\textbf{Theorem G2. }The\textbf{\ }maximal operator $\sigma ^{\alpha ,\ast
,\kappa }$ of $\left( C,\alpha \right) $ $\left( 0<\alpha <1\right) $ means
with respect Walsh-Kaczmarz system is bounded from the Hardy space $%
H_{1/\left( 1+\alpha \right) }$ to the space $L_{1/\left( 1+\alpha \right)
,\infty }$.

Goginava and Nagy \cite{gona} proved that $\sigma ^{\alpha ,\ast ,\kappa }$
is not bounded from the Hardy space $H_{1/\left( 1+\alpha \right) }$ to the
space $L_{1/\left( 1+\alpha \right) }$.

Logarithmic means with respect to the Walsh and Vilenkin systems systems was
studied by several authors. We mention, for instance, the papers by Simon 
\cite{Si1}, Gát \cite{Ga1} and Blahota, Gát \cite{bg}, (see also \cite{tep4}%
). In \cite{G-N-Complex} Goginava and Nagy proved that the maximal operator $%
R^{\ast ,\kappa }$ of Riesz`s means is bounded from the Hardy space $H_{p}$
to the space $weak-L_{p}$, when $p>1/2,$ but is not bounded from the Hardy
space $H_{p}$ to the space $L_{p},$ when $0<p\leq 1/2.$ They also showed
that there exists a martingale $f\in H_{p},$ $(0<p\leq 1),$ such that the
maximal operator $L^{\ast ,\kappa }$ of Nörlund logarithmic means is not
bounded in the space $L_{p}.$

In the two dimensional case approximation properties of Nörlund and Cesáro
means was considered by Nagy (see \cite{na}, \cite{n} and \cite{nagy}). The
results for summability of some Nörlund means of Walsh-Fourier series can be
found in \cite{gog8} and \cite{tep3}.

The main aim of this paper is to investigate $(H_{p},L_{p,\infty })$-type
inequalities for the maximal operators of Nörlund means with monotone
coefficients of one-dimensional Kaczmarz-Fourier series.

This paper is organized as follows: in order not to disturb our discussions
later on some definitions and notations are presented in Section 2. The main
results and some of its consequences can be found in Section 3. For the
proofs of the main results we need some auxiliary results of independent
interest. Also these results are presented in Section 3. The detailed proofs
are given in Section 4.

\section{Definitions and Notations}

Now, we give a brief introduction to the theory of dyadic analysis \cite%
{SWSP}. Let $\mathbf{N}_{+}$ denote the set of positive integers, $\mathbf{%
N:=N}_{+}\cup \{0\}.$

Denote ${\mathbb{Z}}_{2}$ the discrete cyclic group of order 2, that is ${%
\mathbb{Z}}_{2}=\{0,1\},$ where the group operation is the modulo 2 addition
and every subset is open. The Haar measure on ${\mathbb{Z}}_{2}$ is given
such that the measure of a singleton is 1/2. Let $G$ be the complete direct
product of the countable infinite copies of the compact groups ${\mathbb{Z}}%
_{2}.$ The elements of $G$ are of the form 
\begin{equation*}
x=\left( x_{0},x_{1},...,x_{k},...\right) ,\text{ \ \ \ }x_{k}=0\vee 1,\text{
\ }\left( k\in \mathbf{N}\right) .
\end{equation*}

The group operation on $G$ is the coordinate-wise addition, the measure
(denoted by $\mu $) and the topology are the product measure and topology.
The compact Abelian group $G$ is called the Walsh group. A base for the
neighborhoods of $G$ can be given in the following way: 
\begin{equation*}
I_{0}\left( x\right) :=G,\quad I_{n}\left( x\right) :=I_{n}\left(
x_{0},...,x_{n-1}\right) :=\left\{ y\in G:\,y=\left(
x_{0},...,x_{n-1},y_{n},y_{n+1},...\right) \right\} ,
\end{equation*}%
$\left( x\in G,n\in \mathbf{N}\right) .$ These sets are called dyadic
intervals. Denote by $0=\left( 0:i\in \mathbf{N}\right) \in G$ the null
element of $G.$ Let $I_{n}:=I_{n}\left( 0\right) ,$ $\overline{I_{n}}%
:=G\backslash I_{n}\,\left( n\in \mathbf{N}\right) .$ Set $e_{n}:=\left(
0,...,0,1,0,...\right) \in G,$ the $n$-th coordinate of which is 1 and the
rest are zeros $\left( n\in \mathbf{N}\right) .$

For $k\in \mathbf{N}$ and $x\in G$ let us denote the $k$-th Rademacher
function, by 
\begin{equation*}
r_{k}\left( x\right) :=\left( -1\right) ^{x_{k}}.
\end{equation*}

Now, define the Walsh system $w:=(w_{n}:n\in \mathbf{N})$ on $G$ as: 
\begin{equation*}
w_{n}(x):=\overset{\infty }{\underset{k=0}{\Pi }}r_{k}^{n_{k}}\left(
x\right) =r_{\left\vert n\right\vert }\left( x\right) \left( -1\right) ^{%
\underset{k=0}{\overset{\left\vert n\right\vert -1}{\sum }}n_{k}x_{k}}\text{%
\qquad }\left( n\in \mathbf{N}\right) .
\end{equation*}

If $n\in \mathbf{N}$, then $n=\sum\limits_{i=0}^{\infty }n_{i}2^{i}$ can be
written, where $n_{i}\in \{0,1\}\quad \left( i\in \mathbf{N}\right) $, i. e. 
$n$ is expressed in the number system of base 2.

Denote $\left\vert n\right\vert :=\max \{j\in \mathbf{N;}n_{j}\neq 0\},$
that is $2^{\left\vert n\right\vert }\leq n<2^{\left\vert n\right\vert +1}.$

The Walsh-Kaczmarz functions are defined by

\begin{equation*}
\kappa _{n}\left( x\right) :=r_{\left\vert n\right\vert }\left( x\right)
\prod\limits_{k=0}^{\left\vert n\right\vert -1}\left( r_{\left\vert
n\right\vert -1-k}\left( x\right) \right) ^{n_{k}}=r_{\left\vert
n\right\vert }\left( x\right) \left( -1\right)
^{\sum\limits_{k=0}^{\left\vert n\right\vert -1}n_{k}x_{_{\left\vert
n\right\vert -1-k}}}.
\end{equation*}

The Dirichlet kernels are defined

\begin{equation*}
D_{0}:=0\text{, \ \ \ \ }D_{n}^{\psi }:=\sum_{i=0\text{ }}^{n-1}\psi _{i},%
\text{ }\left( \psi =w,\text{ or }\psi =\kappa \right) .
\end{equation*}

The $2^{n}$-th Dirichlet kernels have a closed form (see e.g. \cite{SWSP})%
\begin{equation}
D_{2^{n}}^{w}\left( x\right) =D_{2^{n}}\left( x\right) =D_{2^{n}}^{\kappa
}\left( x\right) \text{\thinspace }=\left\{ 
\begin{array}{ll}
2^{n}\text{ \ \ \ \ \ } & x\in I_{n}, \\ 
0 & x\notin I_{n}.%
\end{array}%
\right.  \label{Dir}
\end{equation}

The norm (or quasi-norm) of the spaces $L_{p}(G)$ and $L_{p,\infty }\left(
G\right) $ are defined by \qquad

\begin{equation*}
\left\Vert f\right\Vert _{p}^{p}:=\int_{G}\left\vert f\right\vert ^{p}d\mu ,%
\text{ \ \ }\left\Vert f\right\Vert _{L_{p,\infty }(G)}^{p}:=\underset{%
\lambda >0}{\sup }\lambda ^{p}\mu \left( f>\lambda \right) ,\text{ \ \ \ }%
\left( 0<p<\infty \right) ,
\end{equation*}%
respectively.

The $\sigma $-algebra generated by the dyadic intervals of measure $2^{-k}$
will be denoted by $F_{k}$ $\left( k\in \mathbf{N}\right) .$ Denote by $%
f=\left( f^{\left( n\right) },n\in \mathbf{N}\right) $ a martingale with
respect to $\left( F_{n},n\in \mathbf{N}\right) $ (for details see, e. g. 
\cite{we2,we3}). The maximal function of a martingale $f$ \ is defined by 
\begin{equation*}
f^{\ast }=\sup\limits_{n\in \mathbf{N}}\left\vert f^{\left( n\right)
}\right\vert .
\end{equation*}

In case $f\in L_{1}\left( G\right) $, the maximal function can also be given
by 
\begin{equation*}
f^{\ast }\left( x\right) =\sup\limits_{n\in \mathbf{N}}\frac{1}{\mu \left(
I_{n}(x)\right) }\left\vert \int\limits_{I_{n}(x)}f\left( u\right) d\mu
\left( u\right) \right\vert ,\ \ x\in G.
\end{equation*}

For $0<p<\infty $ the Hardy martingale space $H_{p}(G)$ consists of all
martingales for which

\begin{equation*}
\left\| f\right\| _{H_{p}}:=\left\| f^{*}\right\| _{p}<\infty .
\end{equation*}

If $f\in L_{1}\left( G\right) ,$ then it is easy to show that the sequence $%
\left( S_{2^{n}}f:n\in \mathbf{N}\right) $ is a martingale.

If $\ f$ \ is a martingale, then the Walsh-Kaczmarz-Fourier coefficients
must be defined in a little bit different way: 
\begin{equation*}
\widehat{f}^{\psi }\left( i\right) =\lim\limits_{n\rightarrow \infty
}\int\limits_{G}f^{\left( n\right) }\psi _{i}d\mu ,\text{ \ \ }\left( \psi
=w,\text{ or }\psi =\kappa \right) .
\end{equation*}

The Walsh-Kaczmarz-Fourier coefficients of $f\in L_{1}\left( G\right) $ are
the same as the ones of the martingale $\left( S_{2^{n}}f:n\in \mathbf{N}%
\right) $ obtained from $f$.

The partial sums of the Walsh-Kaczmarz-Fourier series are defined as
follows: 
\begin{equation*}
S_{M}^{\psi }f:=\sum\limits_{i=0}^{M-1}\widehat{f}\left( i\right) \psi _{i},%
\text{ \ \ }\left( \psi =w,\text{ or }\psi =\kappa \right) .
\end{equation*}

Let \{$q_{k}:$ $k>0$\} be a sequence of nonnegative numbers. The $n$-th Nö%
rlund means for the Fourier series of $\ f$ \ is defined by

\begin{equation}
t_{n}^{\psi }:=\frac{1}{Q_{n}}\overset{n}{\underset{k=1}{\sum }}%
q_{n-k}S_{k}^{\psi }f,\text{ \ \ }\left( \psi =w,\text{ or }\psi =\kappa
\right) ,  \label{nor}
\end{equation}%
where 
\begin{equation*}
Q_{n}:=\sum_{k=0}^{n-1}q_{k}.
\end{equation*}

It is evident that 
\begin{equation*}
t_{n}^{\psi }f\left( x\right) =\int_{G}f\left( x+t\right) F_{n}^{\psi
}\left( t\right) dt,\text{ }
\end{equation*}%
where%
\begin{equation*}
\text{\ }F_{n}^{\psi }=\frac{1}{Q_{n}}\overset{n}{\underset{k=1}{\sum }}%
q_{n-k}D_{k}^{\psi }.
\end{equation*}

Let $q_{0}>0$ and $\underset{n\rightarrow \infty }{\lim }Q_{n}=\infty .$ The
summability method (\ref{nor}) generated by $\{q_{k}:k\geq 0\}$ is regular
if and only if

\begin{equation}
\underset{n\rightarrow \infty }{\lim }\frac{q_{n-1}}{Q_{n}}=0.  \label{1a11}
\end{equation}

It can be found in \cite{mor} (see also \cite{moore}).

The $n$-th Fejér means of a function $f$ \ is given by

\begin{equation*}
\sigma _{n}^{\psi }f:=\frac{1}{n}\sum_{k=0}^{n-1}S_{k}^{\psi }f,\text{ \ \ \ 
}\left( \psi =w,\text{ or }\psi =\kappa \right) .
\end{equation*}

\bigskip Fejér kernel is defined in the usual manner%
\begin{equation*}
K_{n}^{\psi }:=\frac{1}{n}\overset{n}{\underset{k=1}{\sum }}D_{k}^{\psi },%
\text{ \ \ \ }\left( \psi =w,\text{ or }\psi =\kappa \right) .
\end{equation*}

The $\left( C,\alpha \right) $-means are defined as

\begin{equation*}
\sigma _{n}^{\alpha ,\psi }f=\frac{1}{A_{n}^{\alpha }}\overset{n}{\underset{%
k=1}{\sum }}A_{n-k}^{\alpha -1}S_{k}^{\psi }f,\text{ \ \ \ \ }\left( \psi =w,%
\text{ or }\psi =\kappa \right) ,
\end{equation*}%
where

\begin{equation}
A_{0}^{\alpha }=0,\text{ \qquad }A_{n}^{\alpha }=\frac{\left( \alpha
+1\right) ...\left( \alpha +n\right) }{n!},~~\alpha \neq -1,-2,...
\label{1d}
\end{equation}

It is known that%
\begin{equation}
A_{n}^{\alpha }\sim n^{\alpha },\text{ \ }A_{n}^{\alpha }-A_{n-1}^{\alpha
}=A_{n}^{\alpha -1},\text{ }\overset{n}{\underset{k=1}{\sum }}%
A_{n-k}^{\alpha -1}=A_{n}^{\alpha }.  \label{2d}
\end{equation}

The kernel of $\left( C,\alpha \right) $-means is defined in the following
way 
\begin{equation*}
K_{n}^{\alpha ,\psi }f=\frac{1}{A_{n}^{\alpha }}\overset{n}{\underset{k=1}{%
\sum }}A_{n-k}^{\alpha -1}D_{k}^{\psi }f,\text{ \ \ \ \ }\left( \psi =w,%
\text{ or }\psi =\kappa \right) .
\end{equation*}

The $n$-th Riesz`s logarithmic mean $R_{n}$ and Nörlund logarithmic mean $%
L_{n}$ are defined by

\begin{equation*}
R_{n}^{\psi }f:=\frac{1}{l_{n}}\overset{n-1}{\underset{k=0}{\sum }}\frac{%
S_{k}^{\psi }f}{k},\text{ \ }L_{n}^{\psi }f:=\frac{1}{l_{n}}\overset{n-1}{%
\underset{k=1}{\sum }}\frac{S_{k}^{\psi }f}{n-k},\text{ \ \ }\left( \psi =w,%
\text{ or }\psi =\kappa \right) .
\end{equation*}%
respectively, where%
\begin{equation*}
l_{n}:=\sum_{k=1}^{n-1}1/k.
\end{equation*}

For the martingale $f$ we consider the following maximal operators%
\begin{equation*}
\text{\ }t^{\ast ,\psi }f:=\sup_{n\in 
\mathbb{N}
}\left\vert t_{n}^{\psi }f\right\vert ,\text{ \ }\sigma ^{\ast ,\psi
}f:=\sup_{n\in 
\mathbb{N}
}\left\vert \sigma _{n}^{\psi }f\right\vert ,\text{ \ \ }\sigma ^{\alpha
,\ast ,\psi }f:=\sup_{n\in 
\mathbb{N}
}\left\vert \sigma _{n}^{\alpha ,\psi }f\right\vert ,
\end{equation*}%
\begin{equation*}
R^{\ast ,\psi }f:=\sup_{n\in 
\mathbb{N}
}\left\vert R_{n}^{\psi }f\right\vert ,\text{\ \ \ }L^{\ast ,\psi
}f:=\sup_{n\in 
\mathbb{N}
}\left\vert L_{n}^{\psi }f\right\vert ,\text{ \ \ }(\psi =w,\text{ or \ }%
\psi =\kappa ).
\end{equation*}

A bounded measurable function $a$ is p-atom, if there exists an interval $I$%
, such that

\begin{equation*}
\int_{I}ad\mu =0,\text{ \ \ }\left\Vert a\right\Vert _{\infty }\leq \mu
\left( I\right) ^{-1/p},\text{ \ \ supp}\left( a\right) \subset I.
\end{equation*}

\section{Results}

\begin{center}
\textbf{Main results and some of its consequences}
\end{center}

\begin{theorem}
\label{Theorem2}a) Let sequence $\{q_{k}:k\geq 0\}$ be non-increasing,
satisfying condition 
\begin{equation}
\frac{q_{0}n}{Q_{n}}=O\left( 1\right) ,\text{ \ \ \ \ as \ }n\rightarrow
\infty ,  \label{100}
\end{equation}%
or non-decreasing. Then the maximal operators $t^{\ast ,\kappa }$ of Nörlund
means are bounded from the Hardy space $H_{1/2}$ to the space $L_{1/2,\infty
}.$

b)\textit{\ Let} $0<p<1/2$ and \textit{sequence }$\{q_{k}:k\geq 0\}$ \textit{%
be non-decreasing sequence}, \textit{satisfying condition} 
\begin{equation}
\frac{q_{0}}{Q_{n}}\geq \frac{1}{n},  \label{cond0}
\end{equation}%
or \textit{non-increasing. Then there exists a martingale }$f\in H_{p}\left(
G\right) ,$ such that\textit{\ } 
\begin{equation*}
\underset{n\in 
\mathbb{N}
}{\sup }\frac{\left\Vert t_{n}^{\kappa }f\right\Vert _{L_{p,\infty }}}{%
\left\Vert f\right\Vert _{H_{p}}}=\infty .
\end{equation*}
\end{theorem}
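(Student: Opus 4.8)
The plan is to prove (a) by the atomic method and (b) by an explicit counterexample, both resting on the same rearrangement identity for the Walsh--Kaczmarz system. For (a), recall the standard principle that a sublinear maximal operator of this type is bounded from $H_{1/2}$ into $L_{1/2,\infty}$ once a uniform weak estimate is established on $(1/2)$-atoms; so it suffices to bound $t^{\ast,\kappa}a$ for a fixed $(1/2)$-atom $a$ with $\supp(a)\subset I:=I_N$ and $\norm{a}_\infty\le 2^{2N}$. Since the $2^N$-th Dirichlet kernels coincide by (\ref{Dir}), we have $S_{2^N}^\kappa a=E_N a=0$, and because the coefficients $\widehat a^\kappa(j)$ then vanish for $0\le j<2^N$, we get $S_k^\kappa a=0$ for every $k\le 2^N$; consequently $t_n^\kappa a=0$ for all $n\le 2^N$ by (\ref{nor}). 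Hence only indices $n>2^N$ survive, and on $I_N$ the weak bound is trivially controlled by $\mu(I_N)$, so everything reduces to estimating $t^{\ast,\kappa}a$ on $\overline{I_N}$.

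The technical core is a bound for the N\"orlund--Kaczmarz kernel $F_n^\kappa=Q_n^{-1}\sum_{k=1}^n q_{n-k}D_k^\kappa$ on $\overline{I_N}$. Here I would invoke the Skvortsov--G\'at rearrangement $\kappa_{2^A+j}=r_A\,(w_j\circ\tau_A)$ for $0\le j<2^A$, where $\tau_A$ reverses the first $A$ coordinates and is measure preserving; this yields $D_{2^A+j}^\kappa=D_{2^A}+r_A\,(D_j^w\circ\tau_A)$ and thereby rewrites $F_n^\kappa$ through Walsh kernels composed with the maps $\tau_A$. The monotonicity of $\{q_k\}$ is exactly what permits an Abel-summation comparison of $F_n^\kappa$ with the Fej\'er kernel $K_n^\kappa$: in the non-decreasing case one dominates the N\"orlund weights by the uniform Fej\'er weights, while in the non-increasing case condition (\ref{100}) keeps $q_0n/Q_n$ bounded and prevents the leading coefficient from dominating the average. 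Combining these with the known estimates for the Walsh--Kaczmarz Fej\'er kernels underlying Theorem W1, I expect a ring-by-ring pointwise bound for $\sup_{n>2^N}\abs{F_n^\kappa(x+t)}$ on $\overline{I_N}$ that, after integration and insertion of $\norm{a}_\infty\le 2^{2N}$, yields the uniform weak estimate $\sup_{\lambda>0}\lambda^{1/2}\mu\{t^{\ast,\kappa}a>\lambda\}\le C$ and hence (a). I expect this to be the main obstacle, since the rearrangement $\tau_A$ varies with the scale $A=\abs{n}$ and must be handled simultaneously for all $n$ inside the supremum.

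For (b) I would build the counterexample from normalized atoms. Put $a_N:=2^{N(1/p-1)}\bigl(D_{2^{N+1}}^\kappa-D_{2^N}^\kappa\bigr)$, whose maximal function equals $2^N\mathbf{1}_{I_N}$, so $\norm{a_N}_{H_p}=1$. The decisive computation again uses the rearrangement: for $2^N\le k\le 2^{N+1}$ one gets $S_k^\kappa a_N=2^{N(1/p-1)}r_N\,(D_{k-2^N}^w\circ\tau_N)$, so that for $n=2^N+M$,
\[
t_n^\kappa a_N=2^{N(1/p-1)}\,r_N\,\frac{Q_M}{Q_{2^N+M}}\,\bigl(F_M^w\circ\tau_N\bigr),\qquad 0\le M\le 2^N.
\]
Since $\tau_N$ is measure preserving, this gives $\norm{t_n^\kappa a_N}_{L_{p,\infty}}=2^{N(1/p-1)}(Q_M/Q_{2^N+M})\norm{F_M^w}_{L_{p,\infty}}$. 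Choosing $M\sim 2^N$, the monotonicity together with condition (\ref{cond0}) keeps the ratio $Q_M/Q_{2^N+M}$ bounded away from $0$, while for $p<1/2$ the Walsh--N\"orlund kernels grow fast enough that $2^{N(1/p-1)}\norm{F_M^w}_{L_{p,\infty}}\to\infty$; this is precisely the subcritical phenomenon behind the Goginava and G\'at--Goginava negative results.

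It remains to pass from single atoms to one martingale. I would fix a sufficiently lacunary sequence $N_1<N_2<\cdots$ and coefficients $c_k$ with $\sum_k\abs{c_k}^p<\infty$, and set $f:=\sum_k c_k a_{N_k}$, which lies in $H_p$ because the nested supports force $\norm{f}_{H_p}^p$ to be controlled by $\sum_k\abs{c_k}^p$. At the index $n_k:=2^{N_k}+M_k$ the atoms $a_{N_j}$ with $j>k$ contribute nothing (their spectra lie above $n_k$), while the finitely many terms $j<k$ are fully summed and, by lacunarity and $\sum\abs{c_k}^p<\infty$, are negligible in $L_{p,\infty}$ against the $j=k$ term; thus $\norm{t_{n_k}^\kappa f}_{L_{p,\infty}}\ge c\,\abs{c_k}\,2^{N_k(1/p-1)}\norm{F_{M_k}^w}_{L_{p,\infty}}$. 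Taking, e.g., $\abs{c_k}$ equal to the inverse square root of the right-hand factor forces this quantity to diverge while keeping $\sum\abs{c_k}^p<\infty$, so $\sup_n\norm{t_n^\kappa f}_{L_{p,\infty}}=\infty$ with $\norm{f}_{H_p}<\infty$, which is the claim. The delicate point in (b) is the separation estimate letting the single active atom dominate the superposition at the chosen indices; this is where the lacunarity of $\{N_k\}$ must be quantified against the growth of $\norm{F_M^w}_{L_{p,\infty}}$.
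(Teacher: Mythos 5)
Your proposal has genuine gaps in both parts, and in each case the gap sits exactly where the paper's argument is designed to avoid work. For part (a), your entire technical content is deferred: the ``ring-by-ring pointwise bound'' for $\sup_{n>2^{N}}\left\vert F_{n}^{\kappa }\right\vert $ on $\overline{I}_{N}$, handling the scale-dependent rearrangements $\tau _{A}$ simultaneously, is precisely the hard part, and you only say you ``expect'' it; carried out, it would amount to re-proving Weisz's Theorem W1 and then transferring it to N\"orlund kernels. The paper never touches Kaczmarz kernels in part (a): the Abel transformation is applied at the level of the means, giving
\begin{equation*}
t_{n}^{\kappa }f=\frac{1}{Q_{n}}\left( \overset{n-1}{\underset{j=1}{\sum }}
\left( q_{n-j}-q_{n-j-1}\right) j\,\sigma _{j}^{\kappa }f+q_{0}n\,\sigma
_{n}^{\kappa }f\right) ,
\end{equation*}
and monotonicity (together with (\ref{100}) in the non-increasing case) gives $\sum_{j=1}^{n-1}\left\vert q_{n-j}-q_{n-j-1}\right\vert j+q_{0}n\leq cQ_{n}$, hence the pointwise domination $t^{\ast ,\kappa }f\leq c\,\sigma ^{\ast ,\kappa }f$; Theorem W1 then finishes the proof in three lines. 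You name the Abel-summation comparison, but by burying it at kernel level you leave the step that matters unproven.

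For part (b) you use the same atoms and the same rearrangement identity as the paper, and your formula $t_{2^{N}+M}^{\kappa }a_{N}=2^{N(1/p-1)}r_{N}\frac{Q_{M}}{Q_{2^{N}+M}}\left( F_{M}^{w}\circ \tau _{N}\right) $ is correct; but the decisive claim, that for $M\sim 2^{N}$ one has $2^{N(1/p-1)}\left\Vert F_{M}^{w}\right\Vert _{L_{p,\infty }}\rightarrow \infty $, is both unproven and false as stated. Take $q_{k}\equiv 1$ (admissible in both cases of (b)) and $M=2^{N}$: the closed form of the dyadic Fej\'er kernel gives $\left\Vert K_{2^{N}}^{w}\right\Vert _{L_{p,\infty }}\approx 2^{-N(1/p-1)}$, so the product stays bounded. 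In general, $\int_{G}F_{M}^{w}d\mu =1$ only yields the borderline bound $\left\Vert F_{M}^{w}\right\Vert _{L_{p,\infty }}\geq cM^{-(1/p-1)}$; to beat it you would need G\'at--Goginava-type lower kernel estimates for indices $M$ with rich binary structure, which exist for Fej\'er and Ces\`aro kernels but not for general monotone N\"orlund kernels. The paper avoids all of this by choosing $M=1$, i.e.\ the index $2^{N}+1$ --- a choice your scheme explicitly excludes, since then $Q_{M}/Q_{2^{N}+M}=q_{0}/Q_{2^{N}+1}\approx 2^{-N}$ is not bounded below. With $M=1$ only one term survives, $t_{2^{N}+1}^{\kappa }f_{N}=\frac{q_{0}}{Q_{2^{N}+1}}\kappa _{2^{N}}$, whose modulus is the constant $\frac{q_{0}}{Q_{2^{N}+1}}\geq \frac{1}{2^{N}+1}$ (by monotonicity in the non-increasing case, by (\ref{cond0}) in the non-decreasing case), and against $\left\Vert f_{N}\right\Vert _{H_{p}}\leq 2^{-N\left( 1/p-1\right) }$ the ratio diverges like $2^{N\left( 1/p-2\right) }$: the divergence comes from the normalization mismatch, not from kernel growth. (Your final lacunary-superposition step producing a single martingale is sound, and is in fact more explicit than the paper, which passes from non-uniform boundedness to a single $f$ tacitly.)
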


\begin{theorem}
\label{Theorem3}a) Let $0<\alpha <1$, sequence $\{q_{k}:k\geq 0\}$ be
non-increasing and%
\begin{equation}
\frac{q_{0}n^{\alpha }}{Q_{n}}=O\left( 1\right) ,\text{ \ \ }\frac{%
q_{n}-q_{n+1}}{n^{\alpha -2}}=O\left( 1\right) ,\text{ \ \ \ \ as \ }%
n\rightarrow \infty .  \label{no1}
\end{equation}%
\textit{Then the maximal operator }$t^{\ast ,\kappa }$\textit{\ of Nörlund
means are bounded from the Hardy space }$H_{1/\left( 1+\alpha \right) }$%
\textit{\ to the space }$L_{1/\left( 1+\alpha \right) ,\infty }.$

\textit{b) Let }$0<p<1/\left( 1+\alpha \right) $\textit{, sequence }$%
\{q_{k}:k\geq 0\}$\textit{\ be non-increasing and}%
\begin{equation}
\frac{q_{0}}{Q_{n}}\geq \frac{c}{n^{\alpha }},\text{ }0<\alpha \leq 1,\text{
\ as \ }n\rightarrow \infty .  \label{nom3}
\end{equation}%
\textit{Then there exists an martingale }$f\in H_{p}\left( G\right) ,$ 
\textit{such that}%
\begin{equation*}
\underset{n\in 
\mathbb{N}
}{\sup }\frac{\left\Vert t_{n}^{\kappa }f\right\Vert _{L_{p,\infty }}}{%
\left\Vert f\right\Vert _{H_{p}}}=\infty .
\end{equation*}

\textit{c) Let sequence }$\{q_{k}:k\geq 0\}$\textit{\ be non-increasing and }
\begin{equation}
\overline{\lim_{n\rightarrow \infty }}\frac{q_{0}n^{\alpha }}{Q_{n}}=\infty ,
\label{nom2}
\end{equation}%
\textit{Then there exists a martingale }$f\in H_{p}\left( G\right) ,$ 
\textit{such that}%
\begin{equation*}
\underset{n\in 
\mathbb{N}
}{\sup }\frac{\left\Vert t_{n}^{\kappa }f\right\Vert _{L_{1/\left( 1+\alpha
\right) ,\infty }}}{\left\Vert f\right\Vert _{H_{1/\left( 1+\alpha \right) }}%
}=\infty .
\end{equation*}%
The next remark shows that conditions in (\ref{no1}) are sharp in the
following sense:
\end{theorem}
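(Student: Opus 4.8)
The plan is to prove the three parts by different machinery: part (a) is a boundedness statement that I would attack by the atomic method together with a kernel estimate, while parts (b) and (c) are sharpness statements that I would settle by explicit counterexample martingales. Throughout, the bridge to known results will be the Gát identity relating the Walsh-Kaczmarz and Walsh systems: for $2^{A}\le n<2^{A+1}$ one has $\kappa _{n}(x)=r_{A}(x)\,w_{n-2^{A}}(\tau _{A}x)$, where $\tau _{A}$ reverses the first $A$ coordinates and is measure preserving, equivalently
\[
D_{n}^{\kappa }(x)=D_{2^{A}}(x)+r_{A}(x)\,D_{n-2^{A}}^{w}(\tau _{A}x),\qquad 2^{A}\le n<2^{A+1}.
\]
This lets me transfer the analysis of the Kaczmarz kernels to the Walsh kernels on dyadic rings $I_{s}\setminus I_{s+1}$.

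For part (a) I would first invoke the standard reduction of $(H_{p},L_{p,\infty })$-boundedness of a sublinear maximal operator (with $p=1/(1+\alpha )$) to a uniform weak-type estimate on $p$-atoms: it suffices to bound $\sup_{\lambda >0}\lambda ^{p}\mu \{x\in \overline{I_{N}}:t^{\ast ,\kappa }a(x)>\lambda \}$ by an absolute constant for every $p$-atom $a$ supported on a dyadic interval $I_{N}$ of measure $2^{-N}$, since on $I_{N}$ itself the contribution is controlled directly by $\mu (I_{N})$ and $\Vert a\Vert _{\infty }\le 2^{N/p}$. Because $\int _{I_{N}}a=0$ and every $\kappa _{i}$ with $i<2^{N}$ is constant on $I_{N}$, the coefficients $\widehat{a}^{\kappa }(i)$ vanish for $i<2^{N}$, so $S_{k}^{\kappa }a=0$ for $k\le 2^{N}$ and only the blocks $k>2^{N}$ enter the average $t_{n}^{\kappa }a=Q_{n}^{-1}\sum _{k=1}^{n}q_{n-k}S_{k}^{\kappa }a$.

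The crux of (a) is an auxiliary kernel estimate. I would rewrite the Nörlund kernel by Abel transformation as
\[
F_{n}^{\kappa }=\frac{1}{Q_{n}}\Bigl( q_{0}\,n\,K_{n}^{\kappa }-\sum_{k=1}^{n-1}(q_{n-k-1}-q_{n-k})\,k\,K_{k}^{\kappa }\Bigr),
\]
so that for non-increasing $\{q_{k}\}$ the problem becomes a weighted average of Fejér-Kaczmarz kernels $kK_{k}^{\kappa }$ with non-negative weights. The two hypotheses in (\ref{no1}) are exactly what is needed to dominate this average, after integration over the dyadic rings, by the $(C,\alpha )$-Kaczmarz kernel: the bound $q_{0}n^{\alpha }/Q_{n}=O(1)$ controls the leading Fejér term and matches the normalization $A_{n}^{\alpha }\sim n^{\alpha }$, while $q_{n}-q_{n+1}=O(n^{\alpha -2})$ reproduces the decay $q_{n}-q_{n+1}=-A_{n+1}^{\alpha -2}\sim n^{\alpha -2}$ exhibited by the Cesàro weights $q_{k}=A_{k}^{\alpha -1}$ via (\ref{2d}). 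Passing to Walsh kernels through the identity above reduces the required estimate to the known behaviour of the Walsh-Fejér kernels, and the atomic argument is then completed exactly as in the endpoint $(C,\alpha )$ case, so that the conclusion parallels Theorem G2. I expect this kernel majorization, carried out uniformly in $n$ (the supremum in $t^{\ast ,\kappa }$) while summing the ring contributions, to be the main obstacle.

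For the negative parts (b) and (c) I would construct a counterexample martingale $f=\sum_{k}\lambda _{k}a_{k}$ along a lacunary sequence of indices, each $a_{k}$ a normalized martingale difference of the form $D_{2^{N_{k}+1}}^{\kappa }-D_{2^{N_{k}}}^{\kappa }$ supported on a thin block, chosen so that $\Vert f\Vert _{H_{p}}$ is finite and computable (verified by estimating the maximal function of the blocks). I would then select indices $n_{k}$ of the form $2^{N_{k}}$ plus a lower-order dyadic piece and dyadic rings $E_{k}$ on which the Kaczmarz Dirichlet kernels admit lower bounds obtained again through the $\tau _{A}$ transform. The hypotheses serve to isolate one dominant term in $t_{n_{k}}^{\kappa }f=Q_{n_{k}}^{-1}\sum q_{n_{k}-j}S_{j}^{\kappa }f$: the lower bound (\ref{nom3}), $q_{0}/Q_{n}\ge c\,n^{-\alpha }$, in (b) and the limsup condition (\ref{nom2}) in (c) keep the weight $q_{0}/Q_{n_{k}}$ on the top partial sum $S_{n_{k}}^{\kappa }f$ of size at least $\sim n_{k}^{-\alpha }$ (respectively arbitrarily large relative to the normalization). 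This forces $|t_{n_{k}}^{\kappa }f|$ to be bounded below on $E_{k}$ by a quantity whose weak contribution $\lambda ^{p}\mu (E_{k})$ diverges relative to $\Vert f\Vert _{H_{p}}$. Part (c) is precisely the assertion that the first condition in (\ref{no1}) cannot be relaxed, so that together with (b) for $p<1/(1+\alpha )$ the sharpness announced in the remark follows; here the main difficulty is the simultaneous calibration of the parameters $N_{k}$, $n_{k}$ and $\lambda _{k}$ so that $f$ stays in $H_{p}$ while the weak norms blow up.
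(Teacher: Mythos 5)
For part (a) your overall architecture (atomic decomposition, transfer to the Walsh system through the identity $\kappa _{n}\left( x\right) =r_{\left\vert n\right\vert }\left( x\right) w_{n-2^{\left\vert n\right\vert }}\left( \tau _{\left\vert n\right\vert }x\right) $, then imitation of the G\'at--Goginava $(C,\alpha )$ argument) coincides with the paper's, but the step you yourself call the crux has a genuine gap. From your Abel transformation $Q_{n}F_{n}^{\kappa }=q_{0}nK_{n}^{\kappa }-\sum_{k=1}^{n-1}\left( q_{n-k-1}-q_{n-k}\right) kK_{k}^{\kappa }$ one cannot reach the estimate that actually drives the proof, namely the positive majorization $\left\vert F_{n}^{w}\right\vert \leq c\left( \alpha \right) n^{-\alpha }\sum_{j=0}^{\left\vert n\right\vert }2^{j\alpha }K_{2^{j}}^{w}$ (the paper's Lemma \ref{Lemma3}, the exact analogue of G\'at--Goginava's bound for $K_{n}^{\alpha ,w}$). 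Pointwise domination by the Ces\`aro kernel itself is impossible, since that kernel oscillates; and if instead you take absolute values in your formula and use the generic bound $k\left\vert K_{k}^{w}\right\vert \leq c\sum_{j\leq \left\vert k\right\vert }2^{j}K_{2^{j}}^{w}$, then already the leading term gives $q_{0}n\left\vert K_{n}^{\kappa }\right\vert /Q_{n}$, whose majorant contains the top dyadic Fej\'er kernel with weight $q_{0}2^{\left\vert n\right\vert }/Q_{n}\sim n^{1-\alpha }$ (condition (\ref{no1}) only gives $Q_{n}\geq cq_{0}n^{\alpha }$), instead of the required weight $2^{\left\vert n\right\vert \alpha }/n^{\alpha }=O(1)$; the block $n/2<k<n$, where the differences $q_{n-k-1}-q_{n-k}$ sum to a constant, produces the same loss. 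This factor $n^{1-\alpha }$ is fatal at the exponent $1/\left( 1+\alpha \right) $. The missing idea is the paper's Lemma \ref{Lemma2}: inside each dyadic block one must first reflect, writing $\sum_{l=1}^{2^{m}}q_{n-l}D_{l}^{w}=\sum_{l=0}^{2^{m}-1}q_{n-2^{m}+l}D_{2^{m}-l}^{w}$ and using $D_{2^{m}-l}=D_{2^{m}}-w_{2^{m}-1}D_{l}$, and only then Abel-transform. This pairs the large differences $q_{n-2^{m}+l}-q_{n-2^{m}+l+1}=O\left( l^{\alpha -2}\right) $ with \emph{low-order} kernels $lK_{l}^{w}$, leaves exactly one high-order kernel $\left( 2^{m}-1\right) q_{n-1}K_{2^{m}-1}^{w}$ whose small coefficient $q_{n-1}\leq cn^{\alpha -1}$ comes from the second condition in (\ref{no1}), and sets up a recursion onto $Q_{n-2^{m}}F_{n-2^{m}}^{w}$; iterating over the binary expansion of $n$ is what produces the $2^{j\alpha }$ weights.

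For parts (b) and (c) your plan (lacunary superposition of blocks, indices $2^{N_{k}}$ plus lower-order pieces, ring sets $E_{k}$ with Dirichlet-kernel lower bounds via $\tau _{A}$, calibration of $\lambda _{k},N_{k},n_{k}$) is workable in outline but defers exactly the difficult steps, and it misses the observation that makes the paper's proof immediate. Taking the single function $f_{n}=D_{2^{n+1}}-D_{2^{n}}$, all its Kaczmarz--Fourier coefficients below $2^{n}$ vanish, so $S_{j}^{\kappa }f_{n}=0$ for every $j\leq 2^{n}$; hence at the index $2^{n}+1$ the N\"orlund mean collapses to a single term, $t_{2^{n}+1}^{\kappa }f_{n}=\frac{q_{0}}{Q_{2^{n}+1}}S_{2^{n}+1}^{\kappa }f_{n}=\frac{q_{0}}{Q_{2^{n}+1}}\kappa _{2^{n}}$, whose modulus is the \emph{constant} $q_{0}/Q_{2^{n}+1}$ on all of $G$. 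No ring analysis and no kernel lower bounds are needed: combining $\left\Vert f_{n}\right\Vert _{H_{p}}\leq 2^{-n\left( 1/p-1\right) }$ with (\ref{nom3}) (resp.\ (\ref{nom2})) gives ratios at least $c2^{n\left( 1/p-1-\alpha \right) }$ (resp.\ $cq_{0}2^{n\alpha }/Q_{2^{n}+1}$), which blow up. One caveat applies to both your sketch and the paper: this exhibits a sequence $f_{n}$ with unbounded ratios (failure of uniform boundedness) rather than literally the single martingale $f$ asserted in the statement; your ansatz $f=\sum_{k}\lambda _{k}a_{k}$ is the natural route to such an $f$, but then the choice of parameters and the verification that the remaining terms of the N\"orlund mean do not interfere must actually be carried out, which your proposal acknowledges as the main difficulty and leaves open.
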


\begin{remark}
The sequence $\{q_{k}:k\geq 0\}$ of Cesáro means $\sigma _{n}^{\alpha }$
satisfy conditions%
\begin{equation}
\frac{q_{0}}{Q_{n}}\geq \frac{c}{n^{\alpha }},\text{ \ \ \ }%
q_{n}-q_{n+1}\geq \frac{c}{n^{\alpha -2}},\text{ \ \ }0<\alpha \leq 1,\text{
\ as \ }n\rightarrow \infty ,  \label{cond5}
\end{equation}%
but they are not uniformly bounded from the martingale Hardy spaces $%
H_{1/\left( 1+\alpha \right) }\left( G\right) $ to the space $L_{1/\left(
1+\alpha \right) }\left( G\right) .$
\end{remark}

Theorem \ref{Theorem2} follows the following result:

\begin{corollary}
\label{Corollary1}Let $\left\{ q_{k}=\log ^{\left( \beta \right) }\left(
k+1\right) ^{\alpha }:k\geq 0\right\} ,$ where $\alpha \geq 0$, $\beta \in 
\mathbb{N}
_{+}$ and $\log ^{\left( \beta \right) }x=\overset{\beta \text{ times}}{%
\overbrace{\log ...\log x}}.$ Then the following summability method%
\begin{equation*}
\theta _{n}^{\kappa }f=\frac{1}{Q_{n}}\overset{n}{\underset{k=1}{\sum }}\log
^{\left( \beta \right) }\left( n-k-1\right) ^{\alpha }S_{k}^{\kappa }f
\end{equation*}%
is bounded from the Hardy space $H_{1/2}$ to the space $weak-L_{1/2}$ and is
not bounded from $H_{p}$ to the space $weak-L_{p},$ when $0<p<1/2.$
\end{corollary}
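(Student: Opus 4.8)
The plan is to read the corollary off Theorem~\ref{Theorem2} by specialising it to the weights $q_{k}=\bigl(\log ^{(\beta )}(k+1)\bigr)^{\alpha }$. With this choice the summability method $\theta _{n}^{\kappa }$ is exactly the N\"orlund mean $t_{n}^{\kappa }$ generated by $\{q_{k}\}$, and since $\bigl|\theta _{n}^{\kappa }f\bigr|\le t^{\ast ,\kappa }f$ for every $n$, any $(H_{p},L_{p,\infty })$ bound for the maximal operator $t^{\ast ,\kappa }$ transfers to the whole family $\{\theta _{n}^{\kappa }\}$, while the negative statement is literally the one produced by the theorem. Thus the task reduces to deciding which hypotheses of Theorem~\ref{Theorem2} the sequence $\{q_{k}\}$ satisfies, and for that I first record two structural facts: the monotonicity type of $\{q_{k}\}$ and the asymptotics of $Q_{n}=\sum_{k=0}^{n-1}q_{k}$.

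Since $x\mapsto \log ^{(\beta )}x$ is increasing wherever the $\beta$-fold logarithm is positive and $\alpha \ge 0$, the sequence $\{q_{k}\}$ is non-decreasing once the finitely many initial indices (on which the iterated logarithm is not yet positive) are normalised away; these terms do not affect the maximal estimates. Part~(a) is then immediate: the non-decreasing alternative of Theorem~\ref{Theorem2}(a) carries no side condition, so $t^{\ast ,\kappa }$ is bounded from $H_{1/2}$ into $L_{1/2,\infty }$, and the pointwise domination $|\theta _{n}^{\kappa }f|\le t^{\ast ,\kappa }f$ gives the asserted boundedness of $\theta _{n}^{\kappa }$ from $H_{1/2}$ to weak-$L_{1/2}$.

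For part~(b) I would appeal to the non-decreasing alternative of Theorem~\ref{Theorem2}(b), whose only hypothesis is the boundary condition~(\ref{cond0}), namely $q_{0}/Q_{n}\ge 1/n$. The substantive point is the size of $Q_{n}$: because $\bigl(\log ^{(\beta )}x\bigr)^{\alpha }$ is slowly varying one has $Q_{n}=\sum_{k=0}^{n-1}\bigl(\log ^{(\beta )}(k+1)\bigr)^{\alpha }\sim n\bigl(\log ^{(\beta )}n\bigr)^{\alpha }$, whence the regularity criterion~(\ref{1a11}), $q_{n-1}/Q_{n}\to 0$, also holds and the means are genuine regular summations. Once~(\ref{cond0}) is secured for the normalised weights, Theorem~\ref{Theorem2}(b) supplies, for each fixed $0<p<1/2$, a martingale $f\in H_{p}(G)$ with $\sup_{n}\|t_{n}^{\kappa }f\|_{L_{p,\infty }}/\|f\|_{H_{p}}=\infty$, which is exactly the failure of boundedness from $H_{p}$ to weak-$L_{p}$ asserted in the corollary.

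The step I expect to be the genuine obstacle is the verification of~(\ref{cond0}) for these very slowly growing weights, since that inequality is tight. For the constant weights ($\alpha =0$, the Fej\'er case) it holds with equality, $q_{0}/Q_{n}=1/n$, whereas for $\alpha >0$ the asymptotics above make $q_{0}/Q_{n}$ of order $1/\bigl(n(\log ^{(\beta )}n)^{\alpha }\bigr)$, so the inequality is in danger of failing and must be examined with care after the correct normalisation of the iterated logarithm at small indices. Settling this is the crux of part~(b): if~(\ref{cond0}) cannot be forced directly, the fallback is to reuse the extremal martingale built in the proof of Theorem~\ref{Theorem2}(b) and to check that the slow variation of the weights does not restore boundedness. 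Everything else is formal, both halves of the corollary being immediate instances of Theorem~\ref{Theorem2} once the monotonicity and this boundary behaviour are in hand.
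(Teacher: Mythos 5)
Your reduction of the positive half is exactly the paper's (the paper states the corollary as an immediate consequence of Theorem~\ref{Theorem2} and gives no separate proof): the weights are non-decreasing once the finitely many initial indices are normalised, the non-decreasing branch of Theorem~\ref{Theorem2}(a) carries no side condition, and the pointwise domination $\left\vert \theta _{n}^{\kappa }f\right\vert \leq t^{\ast ,\kappa }f$ transfers the bound. That part is correct and complete.

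The gap is in the negative half, and you have located it precisely but then left it open. Condition (\ref{cond0}) does not merely ``risk'' failing for $\alpha >0$: it fails, and necessarily so. For \emph{any} non-decreasing sequence one has $Q_{n}\geq nq_{0}$, so (\ref{cond0}) forces $Q_{n}=nq_{0}$, i.e. $q_{k}\equiv q_{0}$; hence the non-decreasing branch of Theorem~\ref{Theorem2}(b) covers only constant weights (the Fej\'er case $\alpha =0$) and can never be invoked for $\alpha >0$. Consequently the unboundedness statement is not a formal instance of Theorem~\ref{Theorem2}(b) --- this is an imprecision in the paper itself, which your diagnosis correctly exposes --- and your ``fallback'' is not optional but is the actual proof, so it must be carried out. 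It is short: take $f_{n}=D_{2^{n+1}}-D_{2^{n}}$ as in the proof of Theorem~\ref{Theorem2}(b); then $\left\vert t_{2^{n}+1}^{\kappa }f_{n}\right\vert =q_{0}/Q_{2^{n}+1}$, and estimate (\ref{17}) together with the monotonicity bound $Q_{2^{n}+1}\leq \left( 2^{n}+1\right) q_{2^{n}}$ gives
\begin{equation*}
\frac{\left\Vert t_{2^{n}+1}^{\kappa }f_{n}\right\Vert _{L_{p,\infty }}}{\left\Vert f_{n}\right\Vert _{H_{p}}}\geq \frac{cq_{0}2^{n\left( 1/p-1\right) }}{Q_{2^{n}+1}}\geq \frac{cq_{0}2^{n\left( 1/p-1\right) }}{\left( 2^{n}+1\right) \left( \log ^{\left( \beta \right) }\left( 2^{n}+1\right) \right) ^{\alpha }}\geq \frac{c2^{n\left( 1/p-2\right) }}{n^{\alpha }}\rightarrow \infty
\end{equation*}
for $0<p<1/2$, because the exponential factor $2^{n\left( 1/p-2\right) }$ dominates the iterated-logarithmic (indeed any polynomial-in-$n$) growth of $Q_{2^{n}+1}/2^{n}$. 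Without this computation your proposal establishes the corollary only for $\alpha =0$; with it, the argument is complete, and it is what the paper's citation of Theorem~\ref{Theorem2} implicitly relies on.
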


Analogously to Theorem \ref{Theorem2}, if we apply Abel transformation we
obtain that the following is true:

\begin{corollary}
\label{Corollary2}The maximal operator $R^{\ast ,\kappa }$ of \ Riesz`s
means is bounded from the Hardy space $H_{1/2}$ to the space $weak-L_{1/2}$
and is not bounded from $H_{p}$ to the space $weak-L_{p},$ when $0<p<1/2.$
\end{corollary}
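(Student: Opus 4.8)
The plan is to reduce the Riesz means to the Fej\'er means, which form the N\"orlund method generated by the constant sequence $q_k\equiv 1$, by means of an Abel transformation. Writing $\Sigma_k:=\sum_{j=1}^{k}S_j^{\kappa}f=k\sigma_k^{\kappa}f$ and applying summation by parts to the defining sum of $R_n^{\kappa}f$ with respect to the factors $1/k$, I would obtain the representation
\begin{equation*}
R_n^{\kappa}f=\frac{1}{l_n}\left( \sigma_{n-1}^{\kappa}f+\sum_{k=1}^{n-2}\frac{\sigma_k^{\kappa}f}{k+1}\right) .
\end{equation*}
Since $1+\sum_{k=1}^{n-2}1/(k+1)=l_n$, the coefficients on the right are nonnegative and sum to $1$, so $R_n^{\kappa}f$ is a convex combination of the Fej\'er means $\sigma_1^{\kappa}f,\dots,\sigma_{n-1}^{\kappa}f$. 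Passing to absolute values and taking the supremum in $n$ then gives the pointwise domination $R^{\ast,\kappa}f\le\sigma^{\ast,\kappa}f$.

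For the positive part I would invoke part a) of Theorem \ref{Theorem2}. The constant sequence $q_k\equiv 1$ is simultaneously non-increasing and non-decreasing and trivially satisfies $q_0n/Q_n=1=O(1)$, so the Fej\'er maximal operator $\sigma^{\ast,\kappa}$ (which is exactly the object of Theorem W1) is bounded from $H_{1/2}$ to $L_{1/2,\infty}$. By the domination just established, $R^{\ast,\kappa}$ is then bounded from $H_{1/2}$ to $L_{1/2,\infty}$ as well.

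For the negative part I would mimic the counterexample built in the proof of part b) of Theorem \ref{Theorem2} for the constant sequence, which satisfies the non-decreasing hypothesis with equality in (\ref{cond0}). The same martingale $f\in H_p$ should serve; the task is to derive a lower estimate for the Riesz kernel $K_n^{R,\kappa}:=\frac{1}{l_n}\sum_{k=1}^{n-1}D_k^{\kappa}/k$ at a critical index $n^{\ast}$, on a set of controlled measure, paralleling the lower estimate for the Fej\'er kernel used there. Convolving this kernel estimate against the atomic blocks of $f$ would produce a lower bound for $\|R_{n^{\ast}}^{\kappa}f\|_{L_{p,\infty}}$ that outgrows $\|f\|_{H_p}$ as the construction parameter tends to infinity, yielding the claimed unboundedness for every $0<p<1/2$.

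The main obstacle is precisely this lower estimate. In the boundedness direction the convex-combination representation makes the domination by $\sigma^{\ast,\kappa}$ automatic, but unboundedness is \emph{not} inherited from $\sigma^{\ast,\kappa}$ through that one-sided inequality and must be established directly. The dominant contribution to $R_{n^{\ast}}^{\kappa}f$ comes from the top Fej\'er mean $\sigma_{n^{\ast}-1}^{\kappa}f$, which enters with the small weight $1/l_{n^{\ast}}\sim 1/\log n^{\ast}$; one has to verify that this merely logarithmic loss is overwhelmed by the super-critical growth produced by the $H_p$ construction when $p<1/2$, and that the remaining lower-order terms of the convex combination do not cancel the main contribution on the set where the kernel is large.
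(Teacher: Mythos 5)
Your positive half is correct and is essentially the paper's own argument: the Abel transformation exhibits $R_n^{\kappa}f$ as a convex combination of Fej\'er means (with the paper's convention $\sigma_k^{\kappa}f=\frac{1}{k}\sum_{j=0}^{k-1}S_j^{\kappa}f$ the coefficients actually sum to $l_n+1$ rather than $l_n$, but the pointwise domination $R^{\ast,\kappa}f\leq c\,\sigma^{\ast,\kappa}f$ holds with an absolute constant, which is all that is needed), and then Theorem W1 gives the $(H_{1/2},L_{1/2,\infty})$ bound.

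The negative half is a genuine gap. You reduce it to ``derive a lower estimate for the Riesz kernel at a critical index on a set of controlled measure'' and ``verify that the lower-order terms do not cancel,'' declare this the main obstacle, and stop; nothing is proved. Moreover, this plan mischaracterizes the proof of Theorem \ref{Theorem2}~b) that you intend to mimic: that proof contains no kernel lower estimate and no convolution against atomic blocks. It is entirely elementary, and it carries over to Riesz means verbatim. Take $f_n=D_{2^{n+1}}-D_{2^n}$, whose Walsh-Kaczmarz spectrum lies in $[2^n,2^{n+1})$; then $S_k^{\kappa}f_n=0$ for every $k\leq 2^n$ and $S_{2^n+1}^{\kappa}f_n=\kappa_{2^n}$. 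Evaluating at the critical index $2^n+2$, every term in the sum defining $R_{2^n+2}^{\kappa}f_n$ vanishes except the one with $k=2^n+1$, so
\begin{equation*}
\left\vert R_{2^n+2}^{\kappa}f_n\right\vert =\frac{\left\vert \kappa_{2^n}\right\vert }{l_{2^n+2}\left( 2^n+1\right) }\equiv \frac{1}{l_{2^n+2}\left( 2^n+1\right) }\geq \frac{c}{n2^{n}}\qquad \text{everywhere on }G,
\end{equation*}
and the $L_{p,\infty}$-norm of this constant-modulus function is the constant itself: there are no other terms, hence nothing to cancel, and the logarithmic loss you worried about is exactly the harmless factor $1/n$. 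Combining with $\left\Vert f_n\right\Vert _{H_p}\leq 2^{-n\left( 1/p-1\right) }$ (inequality (\ref{16a}) of the paper) gives a ratio of at least $c\,2^{n\left( 1/p-2\right) }/n\rightarrow \infty $ precisely when $0<p<1/2$. Note finally that the Riesz mean is not a N\"{o}rlund mean (the weight $1/k$ multiplies $S_k^{\kappa}f$, not $S_{n-k}^{\kappa}f$), so the unboundedness cannot be quoted as the case $q_k\equiv 1$ of Theorem \ref{Theorem2}~b), nor does ``equality in (\ref{cond0})'' apply; the direct computation above is what the paper means by ``analogously to Theorem \ref{Theorem2}.''
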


By combining first and second part of Theorem \ref{Theorem3} we prove that
the following is true:

\begin{corollary}
\label{Corollary3}Let $\left\{ q_{k}=k^{\alpha -1}:k\geq 0\right\} ,$ where $%
0<\alpha \leq 1.$ Then the following summability method%
\begin{equation*}
L_{n}^{\alpha ,\kappa }f=\frac{1}{Q_{n}}\overset{n}{\underset{k=1}{\sum }}%
\left( n-k\right) ^{\alpha -1}S_{k}^{\kappa }f
\end{equation*}%
is bounded from the Hardy space $H_{1/\left( 1+\alpha \right) }$ to the
space $weak-L_{1/\left( 1+\alpha \right) }$ and is not bounded from $H_{p}$
to the space $weak-L_{p},$ when $0<p<1/\left( 1+\alpha \right) .$
\end{corollary}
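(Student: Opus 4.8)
The plan is to verify that the explicit choice $q_k=k^{\alpha-1}$ satisfies the hypotheses of both parts of Theorem \ref{Theorem3}, after which the two assertions follow by direct application. Since $0<\alpha\le 1$ gives $\alpha-1\le 0$, the map $k\mapsto k^{\alpha-1}$ is non-increasing, so we are squarely in the non-increasing regime demanded by Theorem \ref{Theorem3}(a) and by Theorem \ref{Theorem3}(b).

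The heart of the computation is the order of growth of $Q_n=\sum_{k=0}^{n-1}q_k$. First I would show $Q_n\asymp n^{\alpha}$ by comparing $\sum_{k=1}^{n-1}k^{\alpha-1}$ with the integral $\int_1^n x^{\alpha-1}\,dx=(n^{\alpha}-1)/\alpha$; for $0<\alpha<1$ this yields $Q_n\sim n^{\alpha}/\alpha$, while the borderline $\alpha=1$ degenerates to $Q_n\asymp n$, still of the form $n^{\alpha}$. At this point one must pin down the value of $q_0=0^{\alpha-1}$, which is singular for $\alpha<1$; consistently with the definition of $L_n$ (whose summation stops at $n-1$), the singular $k=n$ term is not included, and $q_0$ is taken to be a fixed positive constant in the conditions that involve it.

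With $Q_n\asymp n^{\alpha}$ in hand, the verification splits into the two directions. For the positive direction I would check (\ref{no1}): the first estimate $q_0 n^{\alpha}/Q_n=O(1)$ is immediate once $Q_n\asymp n^{\alpha}$ and $q_0$ is constant, while the second, $(q_n-q_{n+1})/n^{\alpha-2}=O(1)$, follows from the mean value form $q_n-q_{n+1}=n^{\alpha-1}-(n+1)^{\alpha-1}=(1-\alpha)\xi^{\alpha-2}$ with $\xi\in(n,n+1)$, so that the quotient tends to $1-\alpha$. Hence Theorem \ref{Theorem3}(a) gives boundedness of the maximal operator from $H_{1/(1+\alpha)}$ to $weak-L_{1/(1+\alpha)}$. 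For the negative direction I would check (\ref{nom3}): from $Q_n\asymp n^{\alpha}$ and $q_0>0$ we obtain $q_0/Q_n\ge c/n^{\alpha}$, and since the sequence is non-increasing, Theorem \ref{Theorem3}(b) applies for every $0<p<1/(1+\alpha)$ and produces a martingale $f\in H_p$ witnessing the failure of boundedness into $weak-L_p$.

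The one genuine subtlety I anticipate is the treatment of the singular coefficient $q_0=0^{\alpha-1}$ together with making the two asymptotic estimates uniform over the full range $0<\alpha\le 1$, in particular at the endpoint $\alpha=1$ where $q_k\equiv 1$ and the method collapses to a Fej\'er-type average with $Q_n=n$. Beyond that, the argument is a routine substitution of the explicit sequence into the already-established Theorem \ref{Theorem3}.
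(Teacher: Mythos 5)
Your proposal is correct and takes essentially the same route as the paper, which obtains the corollary precisely by noting that $q_k=k^{\alpha-1}$ is non-increasing with $Q_n\asymp n^{\alpha}$, verifying conditions (\ref{no1}) and (\ref{nom3}), and then combining parts (a) and (b) of Theorem \ref{Theorem3}. The two wrinkles you flag --- the singular coefficient $q_0=0^{\alpha-1}$ and the endpoint $\alpha=1$, where Theorem \ref{Theorem3}(a) is stated only for $0<\alpha<1$ so one must instead fall back on Theorem \ref{Theorem2}(a) for the Fej\'er-type case --- are genuine but are glossed over in the paper as well, and your resolutions of them are the natural ones.
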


By applying second part of Theorem \ref{Theorem3} we obtain that the
following is true

\begin{corollary}
\label{Corollary4}The maximal operator $L^{\ast ,\kappa }$ of \ Nörlund
logarithmic means is not bounded from the Hardy space $H_{p}$ to the space $%
weak-L_{p}$, when $0<p<1.$
\end{corollary}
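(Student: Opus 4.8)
The plan is to recognize the N\'orlund logarithmic mean $L_n^\kappa$ as a particular N\'orlund mean $t_n^\kappa$ and then invoke part b) of Theorem \ref{Theorem3}. Comparing the definition $L_n^\kappa f=\frac{1}{l_n}\sum_{k=1}^{n-1}\frac{S_k^\kappa f}{n-k}$ with the general form $t_n^\kappa f=\frac{1}{Q_n}\sum_{k=1}^n q_{n-k}S_k^\kappa f$, one reads off that $L_n^\kappa$ is the N\'orlund mean generated by the coefficients $q_k=1/k$, with $Q_n=l_n=\sum_{k=1}^{n-1}1/k$. These coefficients are non-increasing, so the monotonicity hypothesis of Theorem \ref{Theorem3} b) is met, and it only remains to verify the growth condition (\ref{nom3}).

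The key observation is that $l_n\sim \log n$ grows more slowly than any power of $n$. Hence for every fixed $\alpha\in(0,1]$ one has $\log n=o(n^\alpha)$, so that $\frac{q_0}{Q_n}=\frac{1}{l_n}\geq \frac{c}{n^\alpha}$ for a suitable constant $c>0$ and all sufficiently large $n$. Thus condition (\ref{nom3}) holds simultaneously for \emph{every} exponent $\alpha\in(0,1]$; this freedom in the choice of $\alpha$ is exactly what lets us reach the full range $0<p<1$. Now fix $p\in(0,1)$. Since the map $\alpha\mapsto 1/(1+\alpha)$ decreases from $1$ (at $\alpha=0$) to $1/2$ (at $\alpha=1$), we may choose $\alpha\in(0,1]$ with $\alpha<(1-p)/p$, which guarantees $p<1/(1+\alpha)$: for $0<p<1/2$ one may simply take $\alpha=1$, while for $1/2\le p<1$ one takes any $\alpha$ strictly below $(1-p)/p\le 1$.

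With this choice the hypotheses of Theorem \ref{Theorem3} b) are satisfied, so there exists a martingale $f\in H_p(G)$ with $\sup_{n}\|t_n^\kappa f\|_{L_{p,\infty}}/\|f\|_{H_p}=\infty$, where $t_n^\kappa=L_n^\kappa$ for the present coefficients. It remains to transfer this divergence to the maximal operator. For every $n$ and every $\lambda>0$ the pointwise bound $L^{\ast,\kappa}f\geq |L_n^\kappa f|$ yields $\mu(L^{\ast,\kappa}f>\lambda)\geq \mu(|L_n^\kappa f|>\lambda)$, whence $\|L^{\ast,\kappa}f\|_{L_{p,\infty}}\geq \sup_n\|L_n^\kappa f\|_{L_{p,\infty}}=\infty$ for the martingale $f$ produced above. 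Since $\|f\|_{H_p}<\infty$, this shows that $L^{\ast,\kappa}$ is not bounded from $H_p$ into $L_{p,\infty}$, i.e.\ into weak-$L_p$, which is the assertion.

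The only delicate point is the bookkeeping of the leading coefficient $q_0$, since the term corresponding to $1/0$ is absent from the logarithmic mean. This is harmless: what condition (\ref{nom3}) really requires is that the largest coefficient divided by $Q_n$ dominate $n^{-\alpha}$, and here the denominator $Q_n=l_n$ is only of logarithmic size, so the estimate goes through for the full range of admissible $\alpha$. Consequently the main substantive work has already been carried out in Theorem \ref{Theorem3} b), and this corollary is a direct specialization once the coefficient sequence is identified and the range of $\alpha$ is matched to $p$.
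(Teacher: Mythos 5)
Your overall strategy is exactly the paper's: the paper proves this corollary by simply invoking part b) of Theorem \ref{Theorem3}, and your key observation --- that $Q_n$ grows only logarithmically, so condition (\ref{nom3}) holds for \emph{every} $\alpha\in(0,1]$, after which one picks $\alpha<(1-p)/p$ to reach any $p\in(0,1)$ --- is precisely the point that makes the full range $0<p<1$ accessible. The passage from the individual means to the maximal operator via the pointwise bound $L^{\ast,\kappa}f\geq\left\vert L_n^{\kappa}f\right\vert$ is also fine.

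However, the coefficient bookkeeping is not merely ``delicate''; as written it is where the argument breaks. With your identification $q_k=1/k$ the leading coefficient $q_0$ is undefined, and setting $q_0=0$ (which is what the absence of the $S_n^{\kappa}f$ term in $L_n^{\kappa}f$ means) destroys both hypotheses of Theorem \ref{Theorem3} b): the sequence $0,1,\tfrac12,\tfrac13,\dots$ is not non-increasing, and (\ref{nom3}), which reads $q_0/Q_n\geq c/n^{\alpha}$, fails outright. Your repair --- that (\ref{nom3}) ``really requires the largest coefficient over $Q_n$ to dominate $n^{-\alpha}$'' --- is not what the proof of Theorem \ref{Theorem3} b) actually uses: that proof tests $t_{2^n+1}^{\kappa}$ on $f_n=D_{2^{n+1}}-D_{2^n}$, for which every partial sum $S_k^{\kappa}f_n$ with $k\leq 2^n$ vanishes, so the only surviving term is $q_0S_{2^n+1}^{\kappa}f_n$; for the logarithmic mean that term is genuinely absent, so $L_{2^n+1}^{\kappa}f_n=0$ and the test yields nothing. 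The clean fix is a shift of index: take $q_k:=1/(k+1)$, which is honestly non-increasing with $q_0=1$ and $Q_n=l_{n+1}$; then $t_n^{\kappa}=L_{n+1}^{\kappa}$, condition (\ref{nom3}) holds for every $\alpha\in(0,1]$ exactly as you argue, and Theorem \ref{Theorem3} b) applies verbatim. (Equivalently, one can test $L_{2^n+2}^{\kappa}$ instead of $L_{2^n+1}^{\kappa}$, which picks up the coefficient $1$ in front of $S_{2^n+1}^{\kappa}f_n$ and gives $\left\vert L_{2^n+2}^{\kappa}f_n\right\vert =1/l_{2^n+2}\geq c/n$, still large enough since $\left\Vert f_n\right\Vert _{H_p}\leq 2^{-n(1/p-1)}$.) With this re-indexing your proof is complete and coincides with the paper's.
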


By using Lemma \ref{Lemma1a} we get that

\begin{corollary}
\label{Corollary5}Let $f\in L_{1}$ and $\{q_{k}:k\geq 0\}$ be non-decreasing
or \textit{non-increasing } satisfying condition (\ref{no1}). Then $%
t_{n}^{\kappa }f\rightarrow f,$ a.e.
\end{corollary}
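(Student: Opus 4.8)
The plan is to prove the corollary by the classical density scheme for almost everywhere convergence, combining the maximal inequality furnished by Lemma \ref{Lemma1a} with pointwise convergence on a dense subclass of $L_1(G)$. The quantitative input I would extract from Lemma \ref{Lemma1a} is the weak-type $(1,1)$ estimate $\left\Vert t^{\ast,\kappa}f\right\Vert_{L_{1,\infty}}\le c\left\Vert f\right\Vert_1$ valid for every $f\in L_1(G)$ with a constant $c$ independent of $f$; this is exactly the form of the bound that the argument needs, since $L_1(G)$ is not contained in $H_p(G)$ for $p<1$ and so the $H_p\to L_{p,\infty}$ estimate of Theorem \ref{Theorem3} cannot be used directly here.

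The dense class will be the Walsh-Kaczmarz polynomials, which are dense in $L_1(G)$ because $\{\kappa_i\}$ is a complete orthonormal system. If $P=\sum_{i=0}^{N-1}\widehat{P}(i)\kappa_i$, then $S_k^{\kappa}P=P$ for all $k\ge N$, so for $n>N$ one may split
\begin{equation*}
t_n^{\kappa}P=\frac{1}{Q_n}\sum_{k=1}^{N-1}q_{n-k}S_k^{\kappa}P+\frac{Q_{n-N+1}}{Q_n}\,P .
\end{equation*}
Since the method is regular — condition (\ref{1a11}) holds, because in the non-increasing case $\frac{q_0n^{\alpha}}{Q_n}=O(1)$ forces $q_{n-1}/Q_n\le q_0/Q_n\to0$ — the first sum tends to $0$ uniformly (it has $N-1$ terms each bounded by $q_0\max_k\left\Vert S_k^{\kappa}P\right\Vert_\infty/Q_n$) and the coefficient $Q_{n-N+1}/Q_n$ tends to $1$. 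Hence $t_n^{\kappa}P\to P$ uniformly, and in particular almost everywhere.

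Finally I would run the approximation step. Given $f\in L_1(G)$ and $\eps>0$, pick a polynomial $P$ with $\left\Vert f-P\right\Vert_1<\eps$. By sublinearity of $t^{\ast,\kappa}$ and the previous step,
\begin{equation*}
\limsup_{n\to\infty}\left\vert t_n^{\kappa}f-f\right\vert\le t^{\ast,\kappa}(f-P)+\limsup_{n\to\infty}\left\vert t_n^{\kappa}P-P\right\vert+\left\vert P-f\right\vert=t^{\ast,\kappa}(f-P)+\left\vert P-f\right\vert ,
\end{equation*}
so that, for every $\lambda>0$, the weak-type bound from Lemma \ref{Lemma1a} together with Chebyshev's inequality gives
\begin{equation*}
\mu\left(\limsup_{n\to\infty}\left\vert t_n^{\kappa}f-f\right\vert>2\lambda\right)\le\frac{c}{\lambda}\left\Vert f-P\right\Vert_1+\frac{1}{\lambda}\left\Vert f-P\right\Vert_1\le\frac{c+1}{\lambda}\eps .
\end{equation*}
Letting $\eps\to0$ shows the left-hand side is $0$ for each $\lambda>0$, whence $t_n^{\kappa}f\to f$ almost everywhere.

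The essential difficulty is entirely carried by Lemma \ref{Lemma1a}: once the weak-type $(1,1)$ maximal inequality is in hand, the rest is the routine Marcinkiewicz--Zygmund argument. Within the corollary itself the only point requiring attention is verifying regularity in both monotonicity cases, so that the dense-class convergence step is legitimate. The non-increasing case is handled above; in the non-decreasing case the second condition in (\ref{no1}) reads $q_{k+1}-q_k=O(k^{\alpha-2})$, and since $\alpha-2<-1$ these increments are summable, so $\{q_k\}$ is bounded and $Q_n\sim cn$, which again yields (\ref{1a11}) and hence the required pointwise convergence on polynomials.
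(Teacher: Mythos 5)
Your proof is correct under your reading of the hypotheses, and its skeleton is the paper's own: the paper disposes of this corollary in one line, citing Lemma \ref{Lemma1a} for the weak type $(1,1)$ bound on $t^{\ast,\kappa}$ (the hypotheses of that lemma being supplied by the work done in the proofs of Theorems \ref{Theorem2} and \ref{Theorem3}), and it leaves the passage from the maximal inequality to a.e.\ convergence completely implicit. What you make explicit, and what the paper omits, is the dense-class step, and here your care is not pedantry: a weak $(1,1)$ bound on the maximal operator does not by itself force the a.e.\ limit to equal $f$; one also needs $t_n^{\kappa}P\to P$ on a dense class, which is exactly regularity of the method, i.e.\ condition (\ref{1a11}).

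That step is precisely where the hypotheses matter. You read the second condition of (\ref{no1}) as constraining the non-decreasing case as well, and that reading is what makes the statement true. If instead ``non-decreasing'' carries no side condition (which is the more natural parsing of the corollary, and is how the corresponding claim is phrased inside the proof of Theorem \ref{Theorem2}), the conclusion genuinely fails: take $q_k=2^k$ and $f=\kappa_1$, so that $S_k^{\kappa}f=f$ for $k\geq 2$ and $t_n^{\kappa}f=\left(Q_{n-1}/Q_n\right)f=\frac{2^{n-1}-1}{2^n-1}\,f\to f/2\neq f$, even though the pointwise domination $t^{\ast,\kappa}f\leq\sigma^{\ast,\kappa}f$ coming from (\ref{2bb})--(\ref{2c}) and hence the weak $(1,1)$ bound remain valid. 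So under that reading no proof can exist, and your version of the hypotheses is the correct repair; this is a real gap in the paper's one-line deduction rather than in your argument. One small slip to fix: your bound $q_{n-k}\leq q_0$ on the finitely many tail terms of $t_n^{\kappa}P$ is valid only for non-increasing sequences; in the non-decreasing case replace $q_0$ by $\sup_k q_k$, which your final paragraph shows is finite, and the argument goes through unchanged.
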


As the consequence of corollaries \ref{Corollary2} and \ref{Corollary5} we
conclude that

\begin{corollary}
\label{Corollary6}Let $f\in L_{1}$. Then%
\begin{equation*}
\sigma _{n}^{\kappa }f\rightarrow f,\text{ \ \ \ a.e., \ \ \ as \ }%
n\rightarrow \infty ,
\end{equation*}%
\begin{equation*}
R_{n}^{\kappa }f\rightarrow f,\text{ \ \ \ a.e., \ \ \ \ as \ }n\rightarrow
\infty
\end{equation*}%
and%
\begin{equation*}
\sigma _{n}^{\alpha ,\kappa }f\rightarrow f,\text{ \ \ \ a.e., \ \ as \ }%
n\rightarrow \infty ,\text{ \ \ }\left( 0<\alpha <1\right) .
\end{equation*}
\end{corollary}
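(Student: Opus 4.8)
The plan is to obtain all three statements by reducing them to the results already available, the decisive observation being that two of the three families are themselves N\"orlund means (\ref{nor}), so that Corollary \ref{Corollary5} applies to them, while the third is governed by the maximal estimate of Corollary \ref{Corollary2}. Thus the first step is to recognise $\sigma_n^{\kappa}$ and $\sigma_n^{\alpha,\kappa}$ as special cases of (\ref{nor}) and to verify that their coefficient sequences satisfy the hypotheses of Corollary \ref{Corollary5}.

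For the Fej\'er means I take $q_k\equiv 1$; then $Q_n=n$ and the N\"orlund kernel $F_n^{\kappa}$ reduces to the Fej\'er kernel $K_n^{\kappa}$, so $t_n^{\kappa}=\sigma_n^{\kappa}$. The constant sequence is at once non-increasing and non-decreasing, and (\ref{no1}) holds for every $0<\alpha<1$, since $q_0n^{\alpha}/Q_n=n^{\alpha-1}=O(1)$ and $q_n-q_{n+1}=0$; hence Corollary \ref{Corollary5} gives $\sigma_n^{\kappa}f\to f$ a.e. For the $(C,\alpha)$ means I take $q_k=A_k^{\alpha-1}$; the third relation in (\ref{2d}) yields $Q_n=\sum_{k=1}^{n}A_{n-k}^{\alpha-1}=A_n^{\alpha}$, so the $(C,\alpha)$ kernel coincides with $F_n^{\kappa}$ and $\sigma_n^{\alpha,\kappa}=t_n^{\kappa}$. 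Since $A_{k+1}^{\alpha-1}/A_k^{\alpha-1}=(\alpha+k)/(k+1)<1$, the sequence $\{A_k^{\alpha-1}\}$ is non-increasing, while $A_n^{\alpha}\sim n^{\alpha}$ and $A_n^{\alpha-1}-A_{n+1}^{\alpha-1}=(1-\alpha)A_n^{\alpha-1}/(n+1)\sim(1-\alpha)n^{\alpha-2}$ show that both parts of (\ref{no1}) hold with this same $\alpha$. A second application of Corollary \ref{Corollary5} then gives $\sigma_n^{\alpha,\kappa}f\to f$ a.e.

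The Riesz means are not of N\"orlund type, so here I would lean on Corollary \ref{Corollary2}, which controls $R^{\ast,\kappa}$ on $H_{1/2}$. Convergence on the dense set of Walsh--Kaczmarz polynomials $P$ is immediate, because $S_k^{\kappa}P=P$ for every large $k$ and $l_n\to\infty$, so that $R_n^{\kappa}P\to P$. The main obstacle is that Corollary \ref{Corollary2} is stated on $H_{1/2}$, whereas the present hypothesis is only $f\in L_1$, so the Hardy-space estimate does not by itself reach every integrable function. I would bridge this gap with the Abel summation identity
\[
R_{n}^{\kappa}f=\frac{\sigma_{n}^{\kappa}f-\sigma_{1}^{\kappa}f}{l_{n}}+\frac{1}{l_{n}}\sum_{k=1}^{n-1}\frac{\sigma_{k+1}^{\kappa}f}{k},
\]
obtained by summation by parts from $S_k^{\kappa}f=(k+1)\sigma_{k+1}^{\kappa}f-k\sigma_k^{\kappa}f$. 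This exhibits $R_n^{\kappa}f$ as a regular logarithmic average of the Fej\'er means plus a term of size $1/l_n$. At a point where $\sigma_k^{\kappa}f\to f$ --- by the first part, almost every point --- the boundary term tends to $0$ and, since the weights $1/(l_n k)$ define a regular (Silverman--Toeplitz) summation, the average tends to $f$; hence $R_n^{\kappa}f\to f$ a.e. In this way the Riesz statement reduces to the Fej\'er statement, with Corollary \ref{Corollary2} supplying the maximal bound that guarantees $R^{\ast,\kappa}f<\infty$ almost everywhere.
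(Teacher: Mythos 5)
Your proof is correct, and for the Fej\'er and $(C,\alpha)$ limits it is essentially the paper's own route: the paper deduces Corollary \ref{Corollary6} from Corollaries \ref{Corollary5} and \ref{Corollary2} without giving details, and your verifications that $q_k\equiv 1$ and $q_k=A_k^{\alpha-1}$ are monotone and satisfy (\ref{no1}) (using $A_n^{\alpha}\sim n^{\alpha}$ and $A_n^{\alpha-1}-A_{n+1}^{\alpha-1}=(1-\alpha)A_n^{\alpha-1}/(n+1)$) are exactly the omitted computations needed to invoke Corollary \ref{Corollary5}. Where you genuinely diverge is the Riesz limit. The paper's intended argument runs through the maximal operator: the proof of Corollary \ref{Corollary2} (Abel transformation yielding $R^{\ast,\kappa}f\leq c\,\sigma^{\ast,\kappa}f$) verifies the hypotheses of Lemma \ref{Lemma1a}, whose second assertion makes $R^{\ast,\kappa}$ of weak type $(1,1)$, and a.e. convergence for $f\in L_{1}$ then follows by the standard density argument (convergence on Walsh--Kaczmarz polynomials plus the weak-type maximal inequality). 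You instead prove the pointwise identity $R_n^{\kappa}f=(\sigma_n^{\kappa}f-\sigma_1^{\kappa}f)/l_n+l_n^{-1}\sum_{k=1}^{n-1}\sigma_{k+1}^{\kappa}f/k$, which exhibits $R_n^{\kappa}f$ as a regular Toeplitz average of Fej\'er means (the weights $1/(l_n k)$ sum exactly to $1$), so that the a.e. convergence of $\sigma_n^{\kappa}f$ transfers pointwise. This is more elementary and self-contained: no weak-type inequality and no density argument are needed. Indeed, your closing appeal to Corollary \ref{Corollary2} is superfluous --- at any point $x$ where $\sigma_k^{\kappa}f(x)\to f(x)$ the sequence $\sigma_k^{\kappa}f(x)$ is automatically bounded, which is all the regularity argument requires, so you never need $R^{\ast,\kappa}f<\infty$ a.e. as a separate input. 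The trade-off is that the paper's maximal-function route yields strictly more (the weak $(1,1)$ bound for $R^{\ast,\kappa}$ itself), while your argument isolates the a.e. convergence statement with minimal machinery.
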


\bigskip

\begin{center}
\textbf{Some auxiliary results}
\end{center}

\begin{lemma}
\label{Lemma1a}(see \cite{we2}) Suppose that an operator $T$ is $\sigma $%
-linear and for some $0<p\leq 1$
\end{lemma}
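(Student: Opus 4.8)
Although the hypothesis is cut off in the excerpt, this is the standard Weisz reduction lemma, whose conclusion is that $T$ is bounded from $H_p$ into $L_{p,\infty}$ (or into $L_p$) once one controls $Ta$ for a single generic $p$-atom $a$; the full hypothesis should read that there is $0<p\le 1$ for which $\int_{\overline I}|Ta|^p\,d\mu\le c_p$ (respectively $\sup_{\rho>0}\rho^p\mu\{x\in\overline I:|Ta(x)|>\rho\}\le c_p$) for every $p$-atom $a$ with support $I$, together with $L_\infty\to L_\infty$ boundedness of $T$. The natural route is therefore the atomic decomposition of the martingale Hardy space: given $f\in H_p(G)$ I would first write $f=\sum_k\mu_k a_k$ as a sum of $p$-atoms $a_k$ with dyadic supports $I_k$, normalised so that $\sum_k|\mu_k|^p\le c\|f\|_{H_p}^p$, and by homogeneity assume $\|f\|_{H_p}=1$.

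The $\sigma$-sublinearity of $T$ then yields the pointwise majorant $|Tf|\le\sum_k|\mu_k|\,|Ta_k|$, and the decisive idea is to treat each $Ta_k$ separately on the support $I_k$ and on its complement $\overline{I_k}$. On $\overline{I_k}$ the hypothesis of the lemma supplies exactly the required bound on the distribution function of $|Ta_k|$. On $I_k$ I would combine the assumed $L_\infty\to L_\infty$ boundedness with the defining normalisation $\|a_k\|_\infty\le\mu(I_k)^{-1/p}$; this gives $\|Ta_k\|_\infty\le c\,\mu(I_k)^{-1/p}$, so that $\int_{I_k}|Ta_k|^p\,d\mu\le c$ and the measure of $\bigcup_k I_k$ is controlled by $\sum_k\mu(I_k)$.

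For the strong conclusion ($H_p\to L_p$) it then suffices to sum, using subadditivity of $t\mapsto t^p$ for $0<p\le 1$: $\int_G|Tf|^p\,d\mu\le\sum_k|\mu_k|^p\int_G|Ta_k|^p\,d\mu\le c\sum_k|\mu_k|^p\le c$. The step I expect to be the main obstacle is the weak endpoint $H_p\to L_{p,\infty}$ that the present paper actually needs, because for $0<p\le 1$ the quasi-norm $\|\cdot\|_{L_{p,\infty}}$ is not subadditive, so the weak contributions of the individual atoms cannot simply be added. To get around this I would fix the level $\lambda>0$, split the atoms according to the size of $\mu(I_k)$ relative to $\lambda^{-p}$ (truncating the decomposition at the scale set by $\lambda$), bound the ``large-support'' terms by the total measure $\sum_k\mu(I_k)$ through the $L_\infty$ estimate, and bound the ``small-support'' terms through the complement hypothesis; verifying that both families sum to a constant independent of $\lambda$, so that $\sup_{\lambda>0}\lambda^p\mu\{|Tf|>\lambda\}\le c$, is the technical core of the argument.
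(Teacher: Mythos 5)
First, a point of reference: the paper does not prove this lemma at all --- it is quoted from Weisz's book \cite{we2} (indeed the lemma environment is truncated in the source, which is why you only saw its opening clause), so the comparison is against the standard proof in the literature rather than against anything in the paper. Your reconstruction of the hypothesis ($\int_{\overline{I}}\left\vert Ta\right\vert ^{p}d\mu \leq c_{p}$ for every $p$-atom $a$, together with $L_{\infty }\rightarrow L_{\infty }$ boundedness) is the right one, and your atomic-decomposition argument for the strong conclusion $\left\Vert Tf\right\Vert _{L_{p}}\leq c_{p}\left\Vert f\right\Vert _{H_{p}}$ --- decompose $f=\sum_{k}\mu _{k}a_{k}$ with $\sum_{k}\left\vert \mu _{k}\right\vert ^{p}\leq c\left\Vert f\right\Vert _{H_{p}}^{p}$, use $\sigma $-sublinearity, treat each $Ta_{k}$ on $I_{k}$ via the $L_{\infty }$ bound and on $\overline{I_{k}}$ via the hypothesis, then sum using $p$-subadditivity of $t\mapsto t^{p}$ --- is exactly the standard argument and is correct.

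Two corrections, however. First, the ``technical core'' you anticipate at the end is a phantom: once the strong bound $\left\Vert Tf\right\Vert _{L_{p}}\leq c_{p}\left\Vert f\right\Vert _{H_{p}}$ is proved, the weak bound is immediate, since $\left\Vert g\right\Vert _{L_{p,\infty }}\leq \left\Vert g\right\Vert _{L_{p}}$ by Chebyshev's inequality; no splitting of atoms at the scale $\lambda ^{-p}$ is needed. (The weak-type formulation appears in the paper's theorems because the operators studied there fail to be bounded from $H_{p}$ into $L_{p}$ at the endpoint; those positive results come from the pointwise domination $t^{\ast ,\kappa }f\leq c\sigma ^{\ast ,\kappa }f$ combined with Weisz's Theorem W1, or from this lemma in its strong form, not from a weak-type sharpening of it.) Second, and this is a genuine gap relative to the full statement: the lemma also asserts that if $0<p<1$ then $T$ is of weak type $(1,1)$, i.e. $\left\Vert Tf\right\Vert _{L_{1,\infty }}\leq c\left\Vert f\right\Vert _{L_{1}}$, and this is precisely the part the paper invokes to deduce a.e. convergence in Corollary 5. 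This claim cannot be reached by the $H_{p}$ atomic decomposition you use, because $L_{1}$ is strictly larger than $H_{1}$ and an integrable function admits no such decomposition; in Weisz's treatment it requires a further ingredient, namely real interpolation between the martingale Hardy space $H_{p}$ $(p<1)$ and $L_{\infty }$ (equivalently, a Calder\'on--Zygmund stopping-time decomposition of $L_{1}$ martingales). Your proposal would need this genuinely different additional argument to cover the whole lemma.
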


\begin{equation*}
\int\limits_{\overline{I}}\left\vert Ta\right\vert ^{p}d\mu \leq
c_{p}<\infty ,
\end{equation*}%
for every $p$-atom $a$, where $I$ denote the support of the atom. If $T$ is
bounded from $L_{\infty \text{ }}$ to $L_{\infty },$ then 
\begin{equation*}
\left\Vert Tf\right\Vert _{L_{p}\left( G\right) }\leq c_{p}\left\Vert
f\right\Vert _{H_{p}\left( G\right) }.
\end{equation*}

Moreover, if $0<p<1$ then $T$ \ is of weak type-(1,1):%
\begin{equation*}
\left\Vert Tf\right\Vert _{L_{1,\infty }\left( G\right) }\leq c\left\Vert
f\right\Vert _{L_{1}\left( G\right) }.
\end{equation*}

\begin{lemma}
\label{Lemma2}\bigskip Let $2^{m}<n\leq 2^{m+1}.$ Then%
\begin{eqnarray*}
Q_{n}F_{n}^{w} &=&Q_{n}D_{2^{m}}-w_{2^{m-1}}\overset{2^{m}-1}{\underset{l=1}{%
\sum }}\left( q_{n-2^{m}+l}-q_{n-2^{m}+l+1}\right) lK_{l}^{w} \\
&&-w_{2^{m}-1}\left( 2^{m}-1\right)
q_{n-1}K_{2^{m}-1}^{w}+w_{2^{m}}Q_{n-2^{m}}F_{n-2^{m}}^{w}.
\end{eqnarray*}
\end{lemma}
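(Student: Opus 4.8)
The plan is to establish the identity by a direct manipulation of the Nörlund Fejér kernel $F_n^w$, splitting the defining sum at the dyadic block boundary $2^m$ and reorganizing the lower block via the Fejér kernels $K_l^w$. Recall that $Q_nF_n^w=\sum_{k=1}^n q_{n-k}D_k^w$. Since $2^m<n\leq 2^{m+1}$, I would first split the index range into $1\leq k\leq 2^m$ and $2^m<k\leq n$, i.e.
\begin{equation*}
Q_nF_n^w=\sum_{k=1}^{2^m}q_{n-k}D_k^w+\sum_{k=2^m+1}^{n}q_{n-k}D_k^w.
\end{equation*}
The second sum I would handle by reindexing $k=2^m+j$ with $1\leq j\leq n-2^m$ and exploiting the Walsh product structure of the Dirichlet kernel, namely $D_{2^m+j}^w=D_{2^m}^w+w_{2^m}D_j^w$, which holds because for $2^m\leq k<2^{m+1}$ the partial sum splits along the highest-order Rademacher factor. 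This turns the tail into $Q_{n-2^m}D_{2^m}^w$ (collecting the $D_{2^m}$ pieces) plus $w_{2^m}\sum_{j=1}^{n-2^m}q_{n-2^m-j}D_j^w=w_{2^m}Q_{n-2^m}F_{n-2^m}^w$, which already accounts for two of the terms on the right-hand side (the $D_{2^m}$-contribution here must combine with the corresponding $D_{2^m}$-contribution extracted below to yield the clean $Q_nD_{2^m}$ term).

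**The main block and the Abel transform.**

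The heart of the argument is the first sum $\sum_{k=1}^{2^m}q_{n-k}D_k^w$. For these indices $1\leq k\leq 2^m$ the Kaczmarz and Walsh kernels agree with the Walsh ones up to the factor $w_{2^{m-1}}$ coming from the symmetry relation $D_k^w$ on $I_m$ versus $D_k$ at the reflected point; this is where the prefactor $w_{2^{m-1}}$ in front of the sum originates. I would write $D_k^w=kK_k^w-(k-1)K_{k-1}^w$ (from $kK_k^w=\sum_{i=1}^k D_i^w$) and then apply the Abel (summation-by-parts) transformation to $\sum_{k=1}^{2^m}q_{n-k}D_k^w$. The Abel transform converts the sum with coefficients $q_{n-k}$ acting on the increments $kK_k^w$ into a sum of differences $q_{n-k}-q_{n-k+1}$, which after reindexing $l=k$ produces exactly $-w_{2^{m-1}}\sum_{l=1}^{2^m-1}(q_{n-2^m+l}-q_{n-2^m+l+1})\,l\,K_l^w$, together with the boundary term at $l=2^m-1$ carrying the weight $q_{n-1}$, giving $-w_{2^{m-1}}(2^m-1)q_{n-1}K_{2^m-1}^w$.

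**Assembling and the main obstacle.**

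Once both sums are rewritten, I would collect the $D_{2^m}^w=D_{2^m}$ contributions. The tail produces $Q_{n-2^m}D_{2^m}$ and the Abel transformation of the main block, carefully bookkeeping the telescoping endpoints, yields the remaining $Q_n-Q_{n-2^m}$ multiple of $D_{2^m}$ (using $Q_n=\sum_{k=0}^{n-1}q_k$ and the telescoping of the $q_{n-k}-q_{n-k+1}$ differences), so that together they combine into the single term $Q_nD_{2^m}$. The main obstacle, and the step demanding the most care, is the precise index bookkeeping in the Abel transformation: tracking which shifted coefficient $q_{n-2^m+l}$ pairs with which $K_l^w$, getting the correct boundary term at $l=2^m-1$ with the factor $(2^m-1)q_{n-1}$, and confirming that the residual constants assemble into $Q_n$ rather than a nearby sum. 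A secondary subtlety is justifying the appearance of $w_{2^{m-1}}$ uniformly across the block, which relies on the Walsh-Kaczmarz kernel symmetry valid for all $k<2^m$; I would verify this factor is common to every term in the transformed sum so that it factors out cleanly as written.
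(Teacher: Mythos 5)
Your overall architecture matches the paper's: split $Q_nF_n^w=\sum_{k=1}^{2^m}q_{n-k}D_k^w+\sum_{k=2^m+1}^{n}q_{n-k}D_k^w$, handle the tail with $D_{2^m+j}^w=D_{2^m}+w_{2^m}D_j^w$ (this part of your proposal is correct and gives $Q_{n-2^m}D_{2^m}+w_{2^m}Q_{n-2^m}F_{n-2^m}^w$, exactly as in the paper), and use an Abel transformation on the lower block. But your treatment of the lower block has a genuine gap: the key ingredient you are missing is the reflection formula for Walsh--Dirichlet kernels,
\begin{equation*}
D_{2^m-j}=D_{2^m}-w_{2^m-1}D_j^w,\qquad j=1,\dots,2^m-1,
\end{equation*}
which the paper applies \emph{after} reversing the order of summation ($k\mapsto 2^m-l$), so that
\begin{equation*}
\sum_{k=1}^{2^m}q_{n-k}D_k^w=\left(Q_n-Q_{n-2^m}\right)D_{2^m}-w_{2^m-1}\sum_{l=1}^{2^m-1}q_{n-2^m+l}D_l^w,
\end{equation*}
and only then Abel-transforms the remaining sum. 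This reflection step is what simultaneously produces the Walsh prefactor (which is $w_{2^m-1}$, the Walsh function of index $2^m-1$; the exponent placement $w_{2^{m-1}}$ in the lemma's statement is a typo, as the paper's proof shows), the term $\left(Q_n-Q_{n-2^m}\right)D_{2^m}$ (which arises as the plain sum $\sum_{l=0}^{2^m-1}q_{n-2^m+l}$, not from any telescoping of Abel differences), and the pairing $\left(q_{n-2^m+l}-q_{n-2^m+l+1}\right)lK_l^w$ in which low-order kernels are matched with low-index coefficients.

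Your proposal instead Abel-transforms $\sum_{k=1}^{2^m}q_{n-k}D_k^w$ directly and tries to source the Walsh prefactor from a ``symmetry relation between Kaczmarz and Walsh kernels.'' That justification is spurious: Lemma \ref{Lemma2} concerns Walsh kernels only, and no Kaczmarz relation enters its proof. Moreover, the direct Abel transform yields
\begin{equation*}
\sum_{k=1}^{2^m-1}\left(q_{n-k}-q_{n-k-1}\right)kK_k^w+2^mq_{n-2^m}K_{2^m}^w,
\end{equation*}
which is structurally different from what the lemma asserts: the boundary term is $2^mq_{n-2^m}K_{2^m}^w$ rather than $\left(2^m-1\right)q_{n-1}K_{2^m-1}^w$, no $D_{2^m}$ term and no Walsh factor appear at all, and each $K_k^w$ is paired with coefficient differences of index near $n-k$ (large for small $k$) instead of near $n-2^m+k$ (small for small $k$). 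This reversed pairing is precisely what makes the lemma usable in the subsequent kernel estimates (Lemma \ref{Lemma3}), and it cannot be obtained from the direct transform by relabelling indices. So the reflection identity is not a bookkeeping subtlety to be checked at the end, as your proposal suggests, but the essential idea of the proof, and without it your argument does not reach the stated formula.
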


\begin{lemma}
\label{Lemma3}Let $0<\alpha <1$ and sequence $\{q_{k}:k\geq 0\}$ be
non-increasing and satisfying condition (\ref{no1}). Then%
\begin{equation*}
\left\vert F_{n}^{w}\right\vert \leq \frac{c\left( \alpha \right) }{%
n^{\alpha }}\overset{\left\vert n\right\vert }{\underset{j=0}{\sum }}%
2^{j\alpha }K_{2^{j}}^{w}.
\end{equation*}
\end{lemma}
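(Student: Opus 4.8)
The plan is to prove the estimate by induction on the scale $m$ determined by $2^{m}<n\le 2^{m+1}$, using Lemma \ref{Lemma2} as the engine: it rewrites $Q_{n}F_{n}^{w}$ as a leading Dirichlet term $Q_{n}D_{2^{m}}$, two ``correction'' terms built from the coefficient increments $q_{k}-q_{k+1}$ and from $q_{n-1}$, and a recursive term $w_{2^{m}}Q_{n-2^{m}}F_{n-2^{m}}^{w}$. Since $0<n-2^{m}\le 2^{m}$, one has $|n-2^{m}|\le m\le |n|$, so iterating Lemma \ref{Lemma2} peels off one binary digit of $n$ at a time and terminates; I prefer to unroll the recursion completely, which expresses $Q_{n}F_{n}^{w}$ as an additive (not multiplicative) sum over the scales $m_{t}$ occupied by the digits of $n$, each summand carrying a factor of modulus one. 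The base case $n=1$ is trivial, as $F_{1}^{w}=D_{1}^{w}=1=K_{1}^{w}$. Taking absolute values and using $K_{2^{j}}^{w}\ge 0$, it then suffices to bound the contribution of each of the three kinds of pieces to $|F_{n}^{w}|=Q_{n}^{-1}|Q_{n}F_{n}^{w}|$.

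The leading term is the harmless one. From $D_{2^{m}}=2^{m}\mathbf{1}_{I_{m}}$ and the direct computation $K_{2^{m}}^{w}=(2^{m}+1)/2$ on $I_{m}$ (and $K_{2^{m}}^{w}\ge 0$ elsewhere) we get the pointwise bound $D_{2^{m}}\le 2K_{2^{m}}^{w}$; since $2^{m}<n\le 2^{m+1}$ gives $2^{m\alpha}\ge 2^{-\alpha}n^{\alpha}$, the scale-$m$ leading term is already absorbed into the $j=m$ summand of the right-hand side. The two correction terms, which involve the non-dyadic Walsh--Fej\'er kernels $K_{l}^{w}$ and $K_{2^{m}-1}^{w}$, are where the work lies, and they require two inputs. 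The first is a pointwise estimate of the form $l\,|K_{l}^{w}|\le c\sum_{i=0}^{|l|}2^{i}K_{2^{i}}^{w}$, valid for every $l$ and resting on the nonnegativity of the dyadic Fej\'er kernels, which converts the non-dyadic kernels into the admissible blocks $2^{i}K_{2^{i}}^{w}$. The second consists of two elementary consequences of the hypotheses: monotonicity gives $q_{j-1}\le Q_{j}/j$ (the last term is dominated by the average), while (\ref{no1}) gives $Q_{n}\ge c\,q_{0}n^{\alpha}$ and $q_{k}-q_{k+1}\le c\,k^{\alpha-2}$.

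For the $q_{n-1}$ term, at the scale $m_{t}$ (with current index $n^{(t)}$) one has $(2^{m_{t}}-1)\,q_{n^{(t)}-1}\le Q_{n^{(t)}}$ by monotonicity, so this piece is $\le Q_{n^{(t)}}Q_{n}^{-1}K_{2^{m_{t}}-1}^{w}$; inserting the Fej\'er estimate and summing the resulting geometric series over $t$ (here $\alpha<1$ makes $\sum 2^{-r(1-\alpha)}$ converge and beats the linear count of scales) produces exactly $c(\alpha)n^{-\alpha}\sum_{i}2^{i\alpha}K_{2^{i}}^{w}$. The genuine obstacle is the increment term $Q_{n}^{-1}\sum_{l}(q_{n^{(t)}-2^{m_{t}}+l}-q_{n^{(t)}-2^{m_{t}}+l+1})\,l\,|K_{l}^{w}|$ summed over all levels: bounding the increment crudely by $c\,l^{\alpha-2}$ and summing per level would cost a spurious logarithmic factor. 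The clean bound instead keeps the true index $n^{(t)}-2^{m_{t}}+l=n^{(t+1)}+l$ in the increment estimate and exploits the fact that the level-intervals $(n^{(t+1)},n^{(t)}]$ are pairwise disjoint and tile $(0,n]$; after inserting the Fej\'er estimate, the double sum over levels and over $l$ collapses to a single convergent series $\sum_{k\ge 2^{i}}k^{\alpha-2}\sim 2^{i(\alpha-1)}$, which is precisely what downgrades the weight $2^{i}$ to $2^{i\alpha}$. Combining this with $Q_{n}\ge c\,q_{0}n^{\alpha}$ yields the factor $n^{-\alpha}$, and assembling the three kinds of contributions completes the induction with a finite constant $c(\alpha)$ (blowing up as $\alpha\to 1$, as the geometric and tail sums degenerate there). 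I expect this last step --- organizing the increment term so that the disjoint tiling of $(0,n]$ replaces a per-level estimate and removes the logarithmic loss --- to be the main difficulty, together with establishing the auxiliary Fej\'er kernel estimate if it is not quoted.
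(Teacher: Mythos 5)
Your overall architecture coincides with the paper's: iterate Lemma \ref{Lemma2} along the binary expansion of $n$, majorize the non-dyadic kernels by the standard estimate $l\vert K_{l}^{w}\vert \leq c\sum_{i=0}^{\vert l\vert }2^{i}K_{2^{i}}^{w}$, and sum the three families of terms. Your treatment of the increment terms --- keeping the true index $n^{(t+1)}+l$, using that the level-intervals tile $(0,n]$ disjointly so the double sum collapses to $\sum_{k\geq 2^{i}}k^{\alpha -2}\leq c(\alpha)2^{i(\alpha -1)}$ --- is correct, and it is exactly the mechanism inside the G\'at--Goginava lemma that the paper cites for its final step.

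The gap is in the two terms you call harmless. After unrolling, the level-$t$ leading term is $Q_{n^{(t-1)}}D_{2^{m_{t}}}$, and the whole sum is divided by the \emph{original} $Q_{n}$, not by $Q_{n^{(t-1)}}$; so what you actually need is $Q_{n^{(t-1)}}/Q_{n}\leq c\,2^{m_{t}\alpha }/n^{\alpha }$, i.e.\ the quasi-monotonicity $Q_{m}/m^{\alpha }\leq C\,Q_{n}/n^{\alpha }$ for $m\leq n$. Your claim that the leading term is ``already absorbed into the $j=m$ summand'' is true only at the top level, where the ratio is $1$; at lower scales the trivial bound $Q_{n^{(t-1)}}\leq Q_{n}$ would require $n^{\alpha }\leq c2^{m_{t}\alpha }$, which fails for small $m_{t}$. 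The same ratio is what creates your ``geometric series over $t$'' for the $q_{n-1}$-terms: $Q_{n^{(t-1)}}/(Q_{n}2^{m_{t}})$ decays like $2^{-m_{t}(1-\alpha )}/Q_n$ only if one knows $Q_{m}\leq Cq_{0}m^{\alpha }$. Crucially, this upper bound does \emph{not} follow from the three ingredients you list: the sequence $q_{k}\equiv 1$ is non-increasing and satisfies $q_{j-1}\leq Q_{j}/j$, $Q_{n}\geq q_{0}n^{\alpha }$ and $q_{k}-q_{k+1}\leq ck^{\alpha -2}$, yet $Q_{m}=m$. (For that sequence the lemma still holds, but via the different mechanism $Q_{m}/Q_{n}=m/n\leq (m/n)^{\alpha }$, which shows some additional structural input is unavoidable.) The paper devotes the first half of its proof to exactly this point: since $q$ is non-increasing, $Q_{n}/n$ is non-increasing, so either $q_{0}n/Q_{n}=O(1)$ --- this Fej\'er-like case is excluded from the lemma and handled by Theorem \ref{Theorem2} --- or $Q_{n}/(q_{0}n)\rightarrow 0$, which forces $q_{n}\rightarrow 0$ as in (\ref{102}); telescoping the increment condition then gives $q_{n}\leq cn^{\alpha -1}$ in (\ref{103}), summation gives $Q_{n}\leq cn^{\alpha }$ in (\ref{104}), and only then does the unrolled sum acquire the Ces\`aro form $A^{\alpha}_{n^{(k-1)}}D_{2^{n_k}}+\dots$ to which the G\'at--Goginava estimates apply. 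To repair your argument you must either reproduce this dichotomy, or prove quasi-monotonicity directly (for instance write $q_{k}=q_{\infty }+r_{k}$ with $r_{k}\downarrow 0$, telescope to get $r_{k}\leq ck^{\alpha -1}$, hence $Q_{m}\leq mq_{\infty }+Cq_{0}m^{\alpha }$, and combine with $Q_{n}\geq \max (nq_{\infty },\,cq_{0}n^{\alpha })$); as written, the leading and $q_{n-1}$ contributions are unproven.
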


\section{Proofs}

\begin{proof}[Proof of Lemma \protect\ref{Lemma2}]
Let $2^{m}<n\leq 2^{m+1}.$ It is easy to show that 
\begin{equation}
\overset{n}{\underset{l=1}{\sum }}q_{n-l}D_{l}^{w}=\overset{2^{m}}{\underset{%
l=1}{\sum }}q_{n-l}D_{l}^{w}+\overset{n}{\underset{l=2^{m}+1}{\sum }}%
q_{n-l}D_{l}^{w}=I+II.  \label{nor1}
\end{equation}

By combining Abel transformation and following equality (See \cite{gog9})%
\begin{equation*}
D_{2^{m}-j}=D_{2^{m}}-w_{2^{m}-1}D_{j},\text{ \ }j=1,...,2^{m}-1,
\end{equation*}%
we get that%
\begin{eqnarray}
I &=&\overset{2^{m}-1}{\underset{l=0}{\sum }}q_{n-2^{m}+l}D_{2^{m}-l}^{w}=%
\overset{2^{m}-1}{\underset{l=1}{\sum }}%
q_{n-2^{m}+l}D_{2^{m}-l}^{w}+q_{n-2^{m}}D_{2^{m}}  \label{nor2} \\
&=&D_{2^{m}}\overset{2^{m}-1}{\underset{l=0}{\sum }}q_{n-2^{m}+l}-w_{2^{m}-1}%
\overset{2^{m}-1}{\underset{l=1}{\sum }}q_{n-2^{m}+l}D_{l}^{w}  \notag \\
&=&\left( Q_{n}-Q_{n-2^{m}}\right) D_{2^{m}}-w_{2^{m}-1}\overset{2^{m}-2}{%
\underset{l=1}{\sum }}\left( q_{n-2^{m}+l}-q_{n-2^{m}+l+1}\right) lK_{l}^{w}
\notag \\
&&-w_{2^{m}-1}q_{n-1}\left( 2^{m}-1\right) K_{2^{m}-1}^{w}  \notag
\end{eqnarray}

Since 
\begin{equation*}
D_{j+2^{m}}^{w}=D_{2^{m}}+w_{2^{m}}D_{j}^{w},\text{ \ \ }j=1,2,...,2^{m}-1
\end{equation*}%
for $II$ we can write that%
\begin{equation}
II=\overset{n-2^{m}}{\underset{l=1}{\sum }}%
q_{n-2^{m}-l}D_{l+2^{m}}^{w}=Q_{n-2^{m}}D_{2^{m}}+w_{2^{m}}Q_{n-2^{m}}F_{n-2^{m}}^{w}
\label{nor3}
\end{equation}

Combining (\ref{nor1}-\ref{nor3}) we complete the proof of Lemma \ref{Lemma2}%
.
\end{proof}

\bigskip

\begin{proof}[Proof of Lemma \protect\ref{Lemma3}]
Let sequence $\{q_{k}:k\geq 0\}$ be non-increasing. The case $q_{0}n/Q_{n}=%
\overset{\_}{O}\left( 1\right) ,$\ as \ $n\rightarrow \infty ,$ will be
considered separately in Theorem \ref{Theorem2}. So, we can exclude this
case.

Since $0<\alpha <1,$ we may assume that $\{q_{k}:k\geq 0\}$ satisfy both
conditions in (\ref{no1}) and in addition, satisfies the following 
\begin{equation*}
\frac{Q_{n}}{q_{0}n}=\overset{\_}{o}\left( 1\right) ,\text{ \ \ \ \ as \ }%
n\rightarrow \infty .
\end{equation*}

It follows that 
\begin{equation}
q_{n}=q_{0}\frac{q_{n}n}{q_{0}n}\leq q_{0}\frac{Q_{n}}{q_{0}n}=\overset{\_}{o%
}\left( 1\right) ,\text{ \ \ \ \ as \ }n\rightarrow \infty  \label{102}
\end{equation}%
By using (\ref{102}) we immediately get that 
\begin{equation}
q_{n}=\overset{\infty }{\underset{l=n}{\sum }}\left( q_{l}-q_{l+1}\right)
\leq \overset{\infty }{\underset{l=n}{\sum }}\frac{1}{l^{2-\alpha }}\leq 
\frac{c}{n^{1-\alpha }}  \label{103}
\end{equation}%
and%
\begin{equation}
Q_{n}\leq \overset{n-1}{\underset{l=0}{\sum }}q_{l}\leq \overset{n}{\underset%
{l=1}{\sum }}\frac{c}{l^{1-\alpha }}\leq cn^{\alpha }  \label{104}
\end{equation}

If we apply (\ref{103}) and (\ref{104}) we get that 
\begin{equation}
Q_{n}D_{2^{m}}\leq 2^{\alpha \left( m+1\right) }D_{2^{m}}\leq
cA_{2^{m}}^{\alpha }D_{2^{m}},\text{ \ }2^{m}<n\leq 2^{m+1}  \label{a1}
\end{equation}%
and 
\begin{equation}
\left( 2^{m}-1\right) q_{n-1}\left\vert K_{2^{m}-1}^{w}\right\vert \leq
cn^{\alpha -1}2^{m}\left\vert K_{2^{m}-1}^{w}\right\vert \leq cA_{n}^{\alpha
-1}2^{m}\left\vert K_{2^{m}-1}^{w}\right\vert .  \label{a2}
\end{equation}%
where $A_{n}^{\alpha }$ is defined by (\ref{1d}).

Let 
\begin{equation*}
n=2^{n_{1}}+2^{n_{2}}+...+2^{n_{r}},\text{ \ \ }n_{1}>n_{2}>...>n_{r},\text{
\ }n^{\left( k\right) }=2^{n_{k+1}}+...+2^{n_{r}}
\end{equation*}

By combining (\ref{a1}) and (\ref{a2}) we have that%
\begin{equation*}
\left\vert Q_{n}F_{n}^{w}\right\vert \leq cA_{n^{\left( 0\right) }}^{\alpha
}D_{2^{n_{1}}}+c\overset{2^{n_{1}}-1}{\underset{l=1}{\sum }}\left\vert
A_{n^{\left( 1\right) }+l}^{\alpha -2}\right\vert \left\vert
lK_{l}^{w}\right\vert +cA_{n^{\left( 0\right) }}^{\alpha
-1}2^{n_{1}}\left\vert K_{2^{n_{1}}-1}^{w}\right\vert +c\left\vert
Q_{n^{\left( 1\right) }}F_{n^{\left( 1\right) }}^{w}\right\vert .
\end{equation*}

By using this process $r$-time we get that%
\begin{equation*}
\left\vert Q_{n}F_{n}^{w}\right\vert \leq c\overset{r}{\underset{k=1}{\sum }}%
\left( A_{n^{\left( k-1\right) }}^{\alpha }D_{2^{n_{k}}}+\overset{2^{n_{k}}-1%
}{\underset{l=1}{\sum }}\left\vert A_{n^{\left( k\right) }+l}^{\alpha
-2}\right\vert \left\vert lK_{l}^{w}\right\vert +A_{n^{\left( k-1\right)
}}^{\alpha -1}2^{n_{k}}\left\vert K_{2^{n_{k}}-1}^{w}\right\vert \right) .
\end{equation*}

The next steps of the proof is analogously to Lemma 5 of the paper \cite%
{gago}, where is proved the analogical estimation for $\left( C,\alpha
\right) $ means.
\end{proof}

\bigskip

\begin{proof}[\textbf{Proof of Theorem \protect\ref{Theorem2}}]
By using Abel transformation we obtain that 
\begin{equation}
Q_{n}:=\overset{n-1}{\underset{j=0}{\sum }}q_{j}=\overset{n}{\underset{j=1}{%
\sum }}q_{n-j}\cdot 1=\overset{n-1}{\underset{j=1}{\sum }}\left(
q_{n-j}-q_{n-j-1}\right) j+q_{0}n  \label{2bb}
\end{equation}

and%
\begin{equation}
t_{n}^{\kappa }f=\frac{1}{Q_{n}}\left( \overset{n-1}{\underset{j=1}{\sum }}%
\left( q_{n-j}-q_{n-j-1}\right) j\sigma _{j}^{\kappa }f+q_{0}n\sigma
_{n}^{\kappa }f\right) .  \label{2c}
\end{equation}

Let sequence $\{q_{k}:k\geq 0\}$ be non-increasing, satisfying condition (%
\ref{100}). Then

\begin{eqnarray*}
\left\vert t_{n}^{\kappa }f\right\vert &\leq &\frac{1}{Q_{n}}\left( \overset{%
n-1}{\underset{j=1}{\sum }}\left\vert q_{n-j}-q_{n-j-1}\right\vert
j+q_{0}n\right) \sigma ^{\ast ,\kappa }f \\
&=&\frac{-1}{Q_{n}}\left( \overset{n-1}{\underset{j=1}{\sum }}\left(
q_{n-j}-q_{n-j-1}\right) j\sigma _{j}^{\kappa }f+q_{0}n\sigma _{n}^{\kappa
}f\right) \sigma ^{\ast ,\kappa }f+\frac{2q_{0}n}{Q_{n}}\sigma ^{\ast
,\kappa }f \\
&\leq &c\sigma ^{\ast ,\kappa }f.
\end{eqnarray*}%
Let sequence $\{q_{k}:k\geq 0\}$ be non-decreasing. Then%
\begin{equation*}
\leq \frac{1}{Q_{n}}\left( \overset{n-1}{\underset{j=1}{\sum }}\left(
q_{n-j}-q_{n-j-1}\right) j+q_{0}n\right) \sigma ^{\ast ,\kappa }f\leq
c\sigma ^{\ast ,\kappa }f.
\end{equation*}

It follows that $t^{\ast ,\kappa }f\leq c\sigma ^{\ast ,\kappa }f.$ By using
Theorem W1 we conclude that the maximal operators $t^{\ast ,\kappa }$ are
bounded from the martingale Hardy space $H_{1/2}$ to the space $%
L_{1/2,\infty }.$

It follows that (see Lemma \ref{Lemma1a}) $t^{\ast ,\kappa }$ is of weak
type (1,1) and $t_{n}^{\kappa }f\rightarrow f,$ a.e.

Let%
\begin{equation*}
f_{n}=D_{2^{n+1}}-D_{2^{n}}.
\end{equation*}

It is evident that 
\begin{equation*}
\widehat{f}_{n}^{\kappa }\left( i\right) =\left\{ 
\begin{array}{l}
\text{ }1,\text{ if }i=2^{n},...,2^{n+1}-1, \\ 
\text{ }0,\text{ otherwise.}%
\end{array}%
\right.
\end{equation*}

From (\ref{Dir}) we get that%
\begin{equation}
\left\Vert f_{n}\right\Vert _{H_{p}}=\left\Vert D_{2^{n}}\right\Vert
_{p}\leq 1/2^{n\left( 1/p-1\right) }.  \label{16a}
\end{equation}

It is easy to show that 
\begin{eqnarray*}
\left\vert t_{2^{n}+1}^{\kappa }f_{n}\right\vert &=&\frac{1}{Q_{2^{n}+1}}%
\left\vert q_{0}S_{2^{n}+1}^{\kappa }f_{n}\right\vert =\frac{q_{0}}{%
Q_{2^{n}+1}}\left\vert D_{2^{n}+1}^{\kappa }-D_{2^{n}}\right\vert \\
&=&\frac{q_{0}}{Q_{2^{n}+1}}\left\vert \kappa _{2^{n}}\right\vert =\frac{%
q_{0}}{Q_{2^{n}+1}}.
\end{eqnarray*}

Let sequence $\{q_{k}:k\geq 0\}$ be non-increasing. Then we automatically
get that

\begin{equation}
\frac{q_{0}}{Q_{2^{n}+1}}\geq \frac{q_{0}}{q_{0}\left( 2^{n}+1\right) }=%
\frac{1}{2^{n}+1}.  \label{101}
\end{equation}

Under condition (\ref{cond0}) we also have inequality (\ref{101}) in the
case when sequence $\{q_{k}:k\geq 0\}$ be non-decreasing. Hence

\begin{equation}
\frac{\left\Vert t_{2^{n}+1}^{\kappa }f_{n}\right\Vert _{L_{p,\infty }}}{%
\left\Vert f_{n}\right\Vert _{H_{p}}}\geq \frac{\frac{cq_{0}}{Q_{2^{n}+1}}%
\left( \mu \left\{ x\in G:\left\vert t_{2^{n}+1}^{\kappa }f_{n}\right\vert
\geq \frac{cq_{0}}{Q_{2^{n}+1}}\right\} \right) ^{1/p}}{\left\Vert
f_{n}\right\Vert _{H_{p}}}  \label{17}
\end{equation}%
\begin{equation*}
\geq \frac{cq_{0}2^{n\left( 1/p-1\right) }}{Q_{2^{n}+1}}\geq \frac{%
cq_{0}2^{n\left( 1/p-1\right) }}{2^{n}+1}\geq cq_{0}2^{n\left( 1/p-2\right)
}.
\end{equation*}

Since, $0<p<1/2$ so $n\rightarrow \infty $ gives our statement.
\end{proof}

\bigskip

\begin{proof}[\textbf{Proof of Theorem \protect\ref{Theorem3}}]
Since $t^{\ast ,\kappa }$ is bounded from $L_{\infty }$ to $L_{\infty },$ by
Lemma \ref{Lemma1a}, the proof of theorem \ref{Theorem3} will be complete,
if we show that%
\begin{equation*}
\int\limits_{\overline{I}_{N}}\left\vert t^{\ast ,\kappa }a\right\vert
^{1/\left( 1+\alpha \right) }d\mu \leq c<\infty ,
\end{equation*}%
for every$1/\left( 1+\alpha \right) $-atom $a,$ where $I$ denotes the
support of the atom$.$

To show boundednes of $t^{\ast ,\kappa }$ we use the method of Gát and
Goginava \cite{gago}. They proved that the\textbf{\ }maximal operator $%
\sigma ^{\alpha ,\ast }$ of $\left( C,\alpha \right) $ $\left( 0<\alpha
<1\right) $ means with respect Walsh-Kaczmarz system is bounded from the
Hardy space $H_{1/\left( 1+\alpha \right) }$ to the space $L_{1/\left(
1+\alpha \right) ,\infty }$. Their proof was depend on the following
inequality%
\begin{equation*}
\left\vert K_{n}^{\alpha ,w}\right\vert \leq \frac{c\left( \alpha \right) }{%
n^{\alpha }}\overset{\left\vert n\right\vert }{\underset{j=0}{\sum }}%
2^{j\alpha }K_{2^{j}}^{w}.
\end{equation*}

Since our estimation of the kernel of $IV$ is the same, it is easy to see
that the proof will be quiet analogously to the Theorem G2.

By using Theorem W we also conclude that the maximal operators $t^{\ast }$
are of weak type-(1,1) and $t_{n}^{\kappa }f\rightarrow f,$ a.e.

Now, we prove the second part of Theorem \ref{Theorem3}. Let $0<p<1/\left(
1+\alpha \right) .$ By combining (\ref{nom3}), (\ref{16a}) and (\ref{17}) we
have that

\begin{eqnarray*}
\frac{\left\Vert t_{2^{n}+1}^{\kappa }f_{n}\right\Vert _{L_{p,\infty }}}{%
\left\Vert f_{n}\right\Vert _{H_{p}}} &\geq &\frac{cq_{0}2^{n\left(
1/p-1\right) }}{Q_{2^{n}+1}}\geq \frac{cq_{0}2^{\alpha n}\left(
2^{n}+1\right) ^{1/p-1-\alpha }}{Q_{2^{n}+1}}\geq c2^{n\left( 1/p-1-\alpha
\right) } \\
&\rightarrow &\infty ,\text{ when }n\rightarrow \infty .
\end{eqnarray*}

Let as prove the third part of Theorem \ref{Theorem3}. By combining (\ref%
{nom2}), (\ref{16a}) and (\ref{17}) we have that%
\begin{equation*}
\frac{\left\Vert t_{2^{n}+1}^{\kappa }f_{n}\right\Vert _{L_{1/\left(
1+\alpha \right) ,\infty }}}{\left\Vert f_{n}\right\Vert _{H_{1/\left(
1+\alpha \right) }}}\geq \frac{cq_{0}2^{n\alpha }}{Q_{2^{n}+1}}\rightarrow
\infty ,\text{ when }n\rightarrow \infty .
\end{equation*}

This complete the proof of Theorem \ref{Theorem3}.
\end{proof}

\textbf{Acknowledgment: }The author would like to thank the referee for
helpful suggestions.

\end{document}